\newtheorem{theorem}{Theorem}
\newtheorem{lemma}{Lemma}
\newtheorem{assumption}{Assumption}
\newtheorem{remark}{Remark}
\numberwithin{equation}{section}
\def\T{{ \mathrm{\scriptscriptstyle T} }}
\newcommand{\Rmnum}[1]{\expandafter\@slowromancap\romannumeral #1@}
\newcommand{\E}{\operatorname{E}}           % expectation
\newcommand{\PP}{\operatorname{P}}        % expectation
\newcommand{\var}{\operatorname{Var}} % variance
\newcommand{\cov}{\operatorname{Cov}} % covariance (used implicitly in text; safe to have)
\begin{document}
\baselineskip=18pt

\title{Bootstrap Consistency for Empirical Likelihood in Density Ratio Models}

\author[a]{Weiwei Zhuang}
\author[b]{Weiqi Yang}
\author[c]{Jiahua Chen\thanks{Corresponding author: Jiahua Chen \quad  Email: jhchen@stat.ubc.ca}}
\affil[a,b]{School of Management, University of Science and Technology of China, Hefei, Anhui, China}
\affil[c]{Department of Statistics, The University of British Columbia, Vancouver, Canada}
\date{}

\maketitle

\begin{abstract}
We establish the validity of bootstrap methods for empirical likelihood (EL) inference under the density ratio model (DRM). In particular, we prove that the bootstrap maximum EL estimators share the same limiting distribution as their population counterparts, both at the parameter level and for distribution functionals. Our results extend existing pointwise convergence theory to weak convergence of processes, which in turn justifies bootstrap inference for quantiles and dominance indices within the DRM framework. These theoretical guarantees close an important gap in the literature, providing rigorous foundations for resampling-based confidence intervals and hypothesis tests. Simulation studies further demonstrate the accuracy and practical value of the proposed approach.

\medskip

\noindent \textbf{Keywords}: Bootstrap; Density ratio model; Empirical likelihood; 
Semiparametric inference; Quantile processes
\medskip

\noindent \textbf{Mathematics Subject Classifications (2020)~}: 62G09, 62E20, 62G10, 62G30
\end{abstract}

\baselineskip=18pt

\section{Introduction}

The empirical likelihood (EL) provides a powerful framework for statistical inference without restrictive parametric assumptions. Since its inception \citep{Owen1988}, EL has been applied in linear models \citep{Owen1991}, estimating equations \citep{94qin}, and sample surveys \citep{chen1993,wu2001}. Reviews in economics \citep{Parente}, survey methodology \citep{wu2016}, and the comprehensive account in \cite{17qin} attest to its broad and continuing impact. 

A particularly important setting for EL inference is the \emph{density ratio model} (DRM), introduced by \cite{anderson1979multivariate} and further developed by \cite{qin1997goodness}. The DRM has been widely studied \citep{98qin,fokianos2001semiparametric,chen2013,cai2017hypothesis,zhang2022density} and applied to time series \citep{kedem2008forecasting}, mixture models \citep{tan2009note,li2017semiparametric}, regression \citep{huang2012proportional}, quality monitoring \citep{hu2018r}, and dominance indices \citep{zhuang}. Compared with traditional approaches, EL methods under DRM are statistically efficient by combining information across multiple samples.

Maximum EL estimators under DRM are asymptotically normal with well-defined variance structures. Yet their variances depend on unknown features and are analytically complex, limiting practical use. This difficulty has motivated widespread application of the bootstrap. Although simulations suggest accurate finite-sample performance, theoretical guarantees have been incomplete.

This paper provides the required theoretical validation. We prove that bootstrap estimators of both parameters and distribution functionals share the same limiting laws as their population counterparts. Our results extend pointwise convergence to weak process convergence, thereby justifying bootstrap inference for quantiles and dominance indices. These findings establish a rigorous foundation for bootstrap-based confidence intervals and hypothesis tests under DRM.

The remainder of the paper is organized as follows. Section~\ref{se2} reviews the DRM framework. Section~\ref{se3} presents bootstrap consistency theorems. Section~\ref{se4} develops applications to quantiles and dominance indices. Section~\ref{se5} reports simulations, and Section~\ref{se6} illustrates practical performance.

\section{Preliminary}
\label{se2}

Suppose $\{x_{kj}, j=1,\ldots,n_k\}$ are independent and identically distributed (i.i.d.)
observations from $F_k$, $k=0,1,\ldots,m$.
Let $n=\sum_{k=0}^m n_k$ be the total sample size and $\rho_k = n_k/n$, $k=0,1,\ldots,m$.
We assume these distributions are related through the density ratio model (DRM)
\begin{equation}
\label{drm}
dF_k(x) = \exp\{{\bm\theta}_k^\T {\bf q}(x)\}\, dF_0(x), \quad k=1,\ldots,m.
\end{equation}
Here ${\bf q}(\cdot)$ is a known vector-valued basis function and
${\bm\theta}^\T =({\bm\theta}_1^\T,{\bm\theta}_2^\T,\ldots,{\bm\theta}_m^\T)$
is an unknown vector of parameters.
Without loss of generality, we set ${\bm\theta}_0={\bf 0}$.
Under the DRM, all distributions share the same support,
a natural assumption in relevant applications.

\citet{qin1997goodness} and \citet{98qin} appear to be the first to introduce EL
to data analysis under the DRM.
Let $p_{kj} = dF_0(x_{kj})$. Then the empirical likelihood under DRM is
\[
L_n(F_0,\ldots,F_m)
= \prod_{k,j} dF_k(x_{kj})
= \prod_{k,j} p_{kj} \times
\exp\left\{\sum_{k,j} {\bm\theta}_k^\T {\bf q}(x_{kj})\right\},
\]
where the products and sums range over $k=0,\ldots,m$ and $j=1,\ldots,n_k$.
The function $L_n$ depends on ${\bm\theta}$ and $F_0$, and its logarithm is
\[
l_n({\bm\theta}, F_0)
= \sum_{k,j}\log(p_{kj})
+ \sum_{k,j} {\bm\theta}_k^\T {\bf q}(x_{kj}).
\]

Inference on ${\bm\theta}$ is typically derived by profiling the EL with respect to $F_0$.
Following \cite{chen2013}, the profile log-EL is
\[
\tilde{l}_n({\bm\theta})
= -\sum_{k,j}
\log\left\{1+\sum_{s=1}^m \nu_s \big[\exp({\bm\theta}_s^\T {\bf q}(x_{kj}))-1\big]\right\}
+ \sum_{k,j} {\bm\theta}_k^\T {\bf q}(x_{kj}),
\]
where the vector $\bm{\nu} = (\nu_1,\ldots,\nu_m)^\T$ is determined by solving the 
constraint system for the implied probabilities
\[
p_{kj} = n^{-1} \left\{ 1 + \sum_{s=1}^m \nu_s
\big[\exp({\bm\theta}_s^\T {\bf q}(x_{kj}))-1\big] \right\}^{-1},
\]
subject to the $m$ constraints
\[
\sum_{k,j}p_{kj}\exp\{{\bm\theta}_r^\T {\bf q}(x_{kj})\}=1,
\quad r=1,\ldots,m.
\]

It is straightforward to show that $\tilde{l}_n$ achieves the same maximum value,
and at the same maximizer, as the simpler dual log-EL:
\begin{equation}
\label{dual_like}
l_n({\bm\theta})
= -\sum_{k,j}\log\Bigg[\sum_{s=0}^m\rho_s
\exp\{{\bm\theta}_s^\T {\bf q}(x_{kj})\}\Bigg]
+ \sum_{k,j} {\bm\theta}_k^\T {\bf q}(x_{kj}),
\end{equation}
where $\rho_s=n_s/n$.
The dual log-EL enjoys many of the same analytical properties as an ordinary parametric likelihood. 
It often leads to asymptotically normal maximum EL estimators and likelihood ratio statistics with 
chi-squared limits, providing a convenient foundation for inference under the DRM.

The maximum empirical likelihood estimator (MELE) of ${\bm\theta}$ is the maximizer 
of \eqref{dual_like}, denoted $\widehat{{\bm \theta}}$.
Given $\widehat{{\bm \theta}}$, the fitted values of $p_{kj}$ are
\[
\widehat{p}_{kj} = \big\{n\cdot h(x_{kj};\widehat{{\bm\theta}})\big\}^{-1},
\quad
h(x;{\bm\theta})=\sum_{k=0}^m \rho_k \exp({\bm\theta}_k^\T {\bf q}(x)).
\]
Consequently, the MELE of $F_r(x)$ is
\begin{align*}
\widehat{F}_r(x)
&= \sum_{k,j}\widehat{p}_{kj}\exp({\widehat{{\bm\theta}}_r^\T}{\bf q}(x_{kj}))
\mathbf{I}(x_{kj}\le x) \\
&= n_r^{-1}\sum_{k,j} h_r(x_{kj};\widehat{{\bm\theta}})\mathbf{I}(x_{kj}\le x),
\end{align*}
where $\mathbf{I}(A)$ is the indicator of event $A$, $\widehat{{\bm\theta}}_0={\bf 0}$, and
\[
h_r(x;{\bm\theta})
= \rho_r \exp({\bm\theta}_r^\T {\bf q}(x)) / h(x;{\bm\theta}).
\]

The MELE $\widehat{{\bm\theta}}$ is asymptotically normal under the following assumptions.

\begin{assumption}
\label{as1}
\ \vspace{-2em}\\
\begin{itemize}
\item[(i)] 
The total sample size $n=\sum_{k=0}^m n_k \to \infty$, with proportions $\rho_k=n_k/n$ converging to constants for $k=0,\ldots,m$.
\item[(ii)] 
$F_0, \ldots, F_m$ satisfy the DRM \eqref{drm} with true parameter $\tilde{\bm\theta}$, 
and $\int h(x;\bm\theta)\,dF_0(x)<\infty$ for $\bm\theta$  in a neighborhood of $\tilde{\bm\theta}$. 
The components of ${\bf q}(x)$ are algebraically independent, with the first component equal to one.
\item[(iii)]
Each $F_k$ has a bounded density $f_k$ and finite second moment.
\end{itemize}
\end{assumption}

In words, the sample sizes from different populations are comparable, 
the basis functions ${\bf q}(x)$ contain no redundancy, and the relevant moments are finite.
Let $\mathbf{W}$ and $\mathbf{S}$ be $md\times md$ block matrices, 
with each block of size $d\times d$, defined by
\[
\mathbf{W}_{rs}
= \int \mathbf{q}(x)\mathbf{q}(x)^\T
\{h_r(x;\tilde{{\bm\theta}})\delta_{rs}
- h_r(x;\tilde{{\bm\theta}})h_s(x;\tilde{{\bm\theta}})\}\,d\bar{F}(x),
\quad d\bar{F}(x)=h(x;\tilde{{\bm\theta}})dF_0(x),
\]
and
\[
\mathbf{S}_{rs} = (\rho_r^{-1}\delta_{rs}+\rho_0^{-1})\,
\text{diag}\{1,0,\ldots,0\},
\quad 1\le r,s \le m,
\]
where $\delta_{rs}=1$ if $r=s$ and $0$ otherwise.

\begin{lemma}
\label{le1}
Under Assumption \ref{as1}, the MELE is asymptotically normal: as $n\to\infty$,
\[
\sqrt{n}(\widehat{{\bm\theta}}-\tilde{{\bm\theta}})
\rightsquigarrow N({\bf 0}, \mathbf{W}^{-1}-\mathbf{S}).
\]
\end{lemma}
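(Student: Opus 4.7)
\vspace{0.5em}\noindent\textbf{Proof proposal.}
This is a classical $M$-estimator statement, versions of which already appear in \cite{qin1997goodness} and \cite{chen2013}; my plan is to carry out the standard score-CLT plus Hessian-LLN plus Taylor-expansion argument adapted to the dual log-EL in~\eqref{dual_like}, and then do the block-matrix algebra that produces the correction $\mathbf{S}$.

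First I would write down the score. Differentiating $l_n$ gives, for $r=1,\ldots,m$,
\[
U_{n,r}(\bm\theta) = \sum_{k=0}^m\sum_{j=1}^{n_k}\{\mathbf{I}(k=r) - h_r(x_{kj};\bm\theta)\}\mathbf{q}(x_{kj}).
\]
The DRM identity $\sum_{k=0}^m n_k\,dF_k = n\,d\bar{F}$ together with $h_r\,d\bar F = \rho_r\,dF_r$ yields $E[U_n(\tilde{\bm\theta})]=\mathbf{0}$. Since the observations are independent across strata and i.i.d.\ within each, a multivariate Lindeberg CLT applies to $n^{-1/2}U_n(\tilde{\bm\theta})$, with the required second moments supplied by Assumption~\ref{as1}(iii). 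Let $\mathbf{V}$ denote the resulting limit covariance, assembled block by block by summing within-stratum variances and converting $dF_k$ to $d\bar F$ via the DRM.

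Second, I would handle the Hessian. A direct calculation gives $\partial h_r/\partial\bm\theta_s^{\T} = h_r(\delta_{rs}-h_s)\mathbf{q}^{\T}$, so $-n^{-1}\nabla^2 l_n(\bm\theta)$ is an empirical average of continuous functions of $\bm\theta$. Under Assumption~\ref{as1}(ii)--(iii), a uniform law of large numbers on a compact neighborhood of $\tilde{\bm\theta}$ shows that $-n^{-1}\nabla^2 l_n(\bm\theta)$ converges in probability to a continuous limit $\mathbf{W}(\bm\theta)$ with $\mathbf{W}(\tilde{\bm\theta})=\mathbf{W}$. Algebraic independence of the components of $\mathbf{q}$ makes $\mathbf{W}$ positive definite, so $l_n$ is locally strictly concave; a standard argmax/Brouwer-type argument then produces a consistent local maximizer $\widehat{\bm\theta}$. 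A one-term Taylor expansion of $0 = U_n(\widehat{\bm\theta})$ around $\tilde{\bm\theta}$, combined with Slutsky, yields
\[
\sqrt{n}(\widehat{\bm\theta}-\tilde{\bm\theta}) = \mathbf{W}^{-1}\,n^{-1/2}U_n(\tilde{\bm\theta}) + o_{\mathrm{P}}(1)
\rightsquigarrow N\big(\mathbf{0},\, \mathbf{W}^{-1}\mathbf{V}\mathbf{W}^{-1}\big).
\]

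The remaining and most delicate task is to verify $\mathbf{W}^{-1}\mathbf{V}\mathbf{W}^{-1} = \mathbf{W}^{-1}-\mathbf{S}$, equivalently $\mathbf{V} = \mathbf{W} - \mathbf{W}\mathbf{S}\mathbf{W}$. The block-wise covariance of the score produces a rank-$m$ correction to $\mathbf{W}$ that involves outer products of the marginal means $\int\mathbf{q}\,dF_k$ weighted by the stratified factors $\rho_r^{-1}\delta_{rs}+\rho_0^{-1}$; because the first component of $\mathbf{q}$ is identically one and $\int dF_k = 1$, these mean vectors have a fixed first coordinate, and after some cancellation the correction reduces to the $\mathrm{diag}\{1,0,\ldots,0\}$ block pattern that defines $\mathbf{S}$ with exactly the prescribed scalar coefficients. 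I expect this block-matrix bookkeeping to be the main obstacle; the score CLT, the Hessian LLN, and the consistency argument are routine under Assumption~\ref{as1}.
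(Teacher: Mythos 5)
Your proposal is correct and follows essentially the same route the paper uses: the paper states Lemma~\ref{le1} without a standalone proof (it is a known result, cf.\ \cite{qin1997goodness,chen2013}), but the identical argument --- quadratic expansion of the dual log-EL, Hessian law of large numbers giving $\mathbf{W}$, score CLT with limit covariance $\mathbf{W}-\mathbf{W}\mathbf{S}\mathbf{W}$, and the representation $\sqrt{n}(\widehat{\bm\theta}-\tilde{\bm\theta})=\mathbf{W}^{-1}n^{-1/2}\nabla l_n(\tilde{\bm\theta})+\mathbf{o}_p(1)$ --- appears inside the proof of Theorem~\ref{the1}. The only caveat is that your sketch of the block-matrix verification $\mathbf{V}=\mathbf{W}-\mathbf{W}\mathbf{S}\mathbf{W}$ should use the per-stratum score means $\int\{\delta_{kr}-h_r(x;\tilde{\bm\theta})\}\mathbf{q}(x)\,dF_k(x)$ rather than the raw means $\int\mathbf{q}\,dF_k$, though the reduction to the $\mathrm{diag}\{1,0,\ldots,0\}$ pattern via the constant first component of $\mathbf{q}$ then goes through as you describe.
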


If $\mathbf{W}^{-1}-\mathbf{S}$ were known, one could directly construct asymptotic confidence 
regions for ${\bm\theta}$ and test hypotheses such as equality of distributions. 
In practice, however, the variance depends on both $\tilde{{\bm\theta}}$ and $F_0(x)$ 
in a neat but numerically involved form, making estimation difficult.

As an alternative, one may resample from the data and compute bootstrap estimates 
$\widehat{{\bm\theta}}_b^*, b=1,2,\ldots,B$.
 The conditional distribution (given the data) of 
 $\sqrt{n}(\widehat{{\bm\theta}}^*-\widehat{{\bm\theta}})$ 
 has the same limit as that of 
$\sqrt{n}(\widehat{{\bm\theta}}-\tilde{{\bm\theta}})$. 
Thus, at modest computational cost, the bootstrap provides an accurate approximation 
without directly estimating $\mathbf{W}^{-1}-\mathbf{S}$.

Although bootstrap methods perform well in simulations, rigorous justification is essential. 
This paper takes up that task and contributes in three directions:
\begin{enumerate}
\item 
\textbf{Bootstrap consistency of parameter estimation}: 
we show that $\sqrt{n}(\widehat{{\bm\theta}}^*-\widehat{{\bm\theta}})$ given the data and 
$\sqrt{n}(\widehat{{\bm\theta}}-\tilde{{\bm\theta}})$ 
have the same limiting distribution.
\item 
\textbf{Bootstrap consistency of distribution estimation}: 
letting $\widehat{F}_r^*(x)$ denote the bootstrap MELE of $F_r(x)$, 
we show that $\sqrt{n}(\widehat{F}_r^*(x)-\widehat{F}_r(x))$ given the data 
and $\sqrt{n}(\widehat{F}_r(x)-F_r(x))$ share the same limiting process.
\item 
\textbf{Validation of existing bootstrap procedures}: 
our general results confirm the theoretical validity of many bootstrap inference 
methods for the DRM proposed in the literature.
\end{enumerate}

We present these results in the next section.

\section{Main Results}\label{se3}

% --- Notation block (revised) ---
We begin by introducing the function spaces and notation used throughout.
Let $\ell^\infty(\mathbb{R})$ denote the space of bounded, real-valued functions on $\mathbb{R}$,
equipped with the supremum norm $\|g\|_\infty=\sup_{x\in\mathbb{R}}|g(x)|$.
Let $L_1[0,1]$ denote the space of Lebesgue integrable functions on $[0,1]$,
equipped with the $L_1$ seminorm $\|g\|_1=\int_0^1 |g(x)|\,dx$.
The expectation and variance of $g(X)$ when $X$ has distribution $F_k$ is denoted as
$\E_k [g(X)]$ and $\var_k [g(X)]$. 
We write $\rightsquigarrow$ for weak convergence.

\subsection{Bootstrap Estimators of Parameters}
\label{se3.1}

We consider the following resampling scheme. 
For each $k=0,\dots,m$, draw a bootstrap sample
$\mathcal{X}^\ast_k=\{x^\ast_{kj}, j=1,\dots,n_k\}$ with replacement from
$\mathcal{X}_k=\{x_{kj}, j=1,\dots,n_k\}$.
Let $p^\ast_{kj}=dF_0(x^\ast_{kj})$. The bootstrap log-EL is
\[
l^\ast_n(\boldsymbol{\theta}, F_0)
=
\sum_{k,j}\log(p^\ast_{kj})+\sum_{k,j}\boldsymbol{\theta}_k^\T \mathbf{q}(x^\ast_{kj}),
\]
and the corresponding bootstrap dual log-EL is
\begin{equation}
\label{dual_boot}
l^\ast_n(\boldsymbol{\theta})
= 
-\sum_{k,j}\log\!\left\{\sum_{s=0}^m \rho_s \exp\!\big(\boldsymbol{\theta}_s^\T \mathbf{q}(x^\ast_{kj})\big)\right\}
+ 
\sum_{k,j} \boldsymbol{\theta}_k^\T \mathbf{q}(x^\ast_{kj}).
\end{equation}
The bootstrap MELE $\widehat{\boldsymbol{\theta}}^\ast$ maximizes \eqref{dual_boot}.

Next we show that $\sqrt{n}(\widehat{{\bm \theta}}^\ast-\widehat{{\bm \theta}})$ and
$\sqrt{n}(\widehat{{\bm \theta}}-\tilde{{\bm \theta}})$ 
have the same limiting distribution.
As a technical preparation, we first give a lemma with a simple proof.
Let $\mathcal{X}_n=\{\mathcal{X}_k,\ k=0,\dots,m\}$.

\begin{lemma}
\label{preparation}
 For any function \(g(\cdot)\) satisfying  \(\E_k [g^2(X)] < \infty\), for $k=0, 1, \ldots, m$.
\begin{equation}
 \label{op_star}
 \frac{1}{n}\sum_{k,j}g(x^\ast_{kj}) -\sum_{k} \rho_k\E_k [g(X)]
 =\mathbf{o}_p(1).
\end{equation}
\end{lemma}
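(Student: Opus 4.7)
The plan is to reduce the claim to a per-stratum bootstrap law of large numbers and then invoke a conditional Chebyshev argument. First I would rewrite the left-hand side by grouping within each sample,
\[
\frac{1}{n}\sum_{k,j} g(x^\ast_{kj})
= \sum_{k=0}^{m}\rho_k\,\bar{g}^\ast_k,
\qquad
\bar{g}^\ast_k := \frac{1}{n_k}\sum_{j=1}^{n_k} g(x^\ast_{kj}),
\]
so that the claim follows once I show $\bar g^\ast_k-\E_k[g(X)]=o_p(1)$ for each $k=0,\dots,m$, given that $m$ is fixed and $\rho_k\le 1$ with $\rho_k$ convergent by Assumption~\ref{as1}(i).

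Next I would insert the empirical average $\bar g_k := n_k^{-1}\sum_{j=1}^{n_k}g(x_{kj})$ and split
\[
\bar g^\ast_k - \E_k[g(X)] = \bigl(\bar g^\ast_k - \bar g_k\bigr) + \bigl(\bar g_k - \E_k[g(X)]\bigr).
\]
The second term is $o_p(1)$ by the ordinary weak law of large numbers, since $\E_k[g^2(X)]<\infty$ implies $\E_k|g(X)|<\infty$, and $n_k\to\infty$ by Assumption~\ref{as1}(i).

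For the first, genuinely bootstrap-related, term I would condition on $\mathcal{X}_n$. Because $x^\ast_{k1},\dots,x^\ast_{kn_k}$ are i.i.d.\ draws from the empirical distribution on $\mathcal{X}_k$, one has $\E[\bar g^\ast_k\mid \mathcal{X}_n]=\bar g_k$ and
\[
\var\bigl(\bar g^\ast_k\mid \mathcal{X}_n\bigr)
= \frac{1}{n_k}\Bigl\{\frac{1}{n_k}\sum_{j=1}^{n_k} g^2(x_{kj}) - \bar g_k^2\Bigr\}.
\]
By the WLLN applied to $g^2$, the bracketed quantity converges in probability to $\var_k[g(X)]<\infty$, so the conditional variance is $O_p(n_k^{-1})=o_p(1)$. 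A conditional Chebyshev inequality then gives $\bar g^\ast_k-\bar g_k=o_p(1)$ conditionally, and dominated convergence of probabilities yields the same unconditionally.

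Combining the two bounds through the triangle inequality and weighting by $\rho_k\le 1$, the finite sum $\sum_k \rho_k(\bar g^\ast_k-\E_k[g(X)])$ is $o_p(1)$, which is exactly \eqref{op_star}. I do not anticipate a real obstacle here: the only mild subtlety is moving from the conditional Chebyshev bound to an unconditional $o_p(1)$ statement, which is handled by the standard device of taking expectations of the probabilities involved.
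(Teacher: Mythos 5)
Your argument is correct and follows essentially the same route as the paper: both condition on $\mathcal{X}_n$ and control a bias term via the law of large numbers for $\bar g_k$ and a variance term of order $O_p(n_k^{-1})$, differing only in that you apply Chebyshev stratum by stratum while the paper computes the conditional mean-squared error of the full weighted sum in one step. Your explicit handling of the passage from the conditional bound to the unconditional $o_p(1)$ statement is a welcome touch that the paper leaves implicit.
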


\begin{proof}
Utilizing the finite second moment condition, it suffices to show
$$
\E
\left[\left(\frac{1}{n}\sum_{k,j}g(x^\ast_{kj})
-
\sum_{k}\rho_k\E_k\{g(X)\}\right)^2\,\bigg|\,\mathcal{X}_n\right] 
= o_p(1).
$$
It is seen
\begin{align*}
    &\E\left[\left(
    \frac{1}{n}\sum_{k,j}g(x^\ast_{kj})- \sum_{k} \rho_k\E_k [g(X)] \right)^2\,\bigg|\,\mathcal{X}_n\right] \\
    &\qquad= 
    \underbrace{\var
    \left[\frac{1}{n}\sum_{k,j}g(x^\ast_{kj})\,\bigg|\,\mathcal{X}_n\right]}_{\text{(A) Conditional Variance}} 
    + \underbrace{
    \left( \E
    \left[\frac{1}{n}\sum_{k,j}g(x^\ast_{kj})\,\bigg| \,\mathcal{X}_n\right] 
    - \sum_k \rho_k \E_k[g(X)]\right)^2}_{\text{(B) Squared Conditional Bias}}.
\end{align*}
Denote the sample mean and sample variance of $\{ g(x_{kj}), j=1, \ldots, n_k\}$ 
by  $\bar{g}_k$ and $s^2_k$.
By the law of large numbers, $\bar{g}_k \to \E_k [g(X)]$ and $s^2_k\to \var_k [g(X)]$
almost surely.
Because the bootstrap samples $x^\ast_{kj}$ are conditionally independent 
given $\mathcal{X}_n$, we have
\begin{align*}
    \var\left[\frac{1}{n}\sum_{k,j}g(x^\ast_{kj})\,\bigg|\,\mathcal{X}_n\right] 
    = \frac{1}{n^2} \sum_{k, j} \var \left[g(x^\ast_{kj})\,\big|\,\mathcal{X}_n\right] 
    = \frac{1}{n} \sum_{k=1}^m \rho_k s^2_k = o_p(1)
\end{align*}
and
$$
\E \left[\frac{1}{n}\sum_{k,j}g(x^\ast_{kj})\,\bigg|\,\mathcal{X}_n\right] 
=\sum_{k=1}^m \rho_k \bar{g}_k
 \to \sum_k \rho_k \E_k [g(X)]
$$
almost surely. 
Hence, both Terms (A) and (B) are $o_p(1)$ and the lemma is proven.
\end{proof}

\begin{theorem}
\label{the1}
Under Assumption \ref{as1},
\begin{equation}\label{theta_consis}
\sup_{\bm \theta}\left|
\PP\!\left(\sqrt{n}(\widehat{\bm \theta}^\ast-\widehat{\bm \theta}) \le {\bm \theta}\,\middle|\,{\cal X}_n\right)
- \PP({\bm Z}\le {\bm \theta})\right|
= \mathbf{o}_p(1),
\end{equation}
where ${\bm Z}\sim N({\bf 0}, \mathbf{W}^{-1}-\mathbf{S})$.
\end{theorem}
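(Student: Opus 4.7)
The plan is to adapt the M-estimation argument used for Lemma~\ref{le1} to the bootstrap world: Taylor-expand the derivative of the bootstrap dual log-EL in \eqref{dual_boot} around $\widehat{\bm\theta}$, combine a conditional law of large numbers for the Hessian with a conditional central limit theorem for the score, and finish with Polya's theorem. The clean feature is that the estimating equation characterizing $\widehat{\bm\theta}$ makes the bootstrap score at $\widehat{\bm\theta}$ have \emph{exact} conditional mean zero, eliminating the usual bias-correction step.

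First I would establish conditional consistency $\widehat{\bm\theta}^\ast-\widehat{\bm\theta}=\mathbf{o}_p(1)$. The function $n^{-1}l_n^\ast(\bm\theta)$ is concave in $\bm\theta$, and Lemma~\ref{preparation} applied with $g(x)=\log h(x;\bm\theta)$ and $g(x)=\bm\theta_k^\T\mathbf q(x)$ on a countable dense grid of $\bm\theta$-values, together with the equicontinuity furnished by the locally bounded gradient of $\log h(\cdot;\bm\theta)$ on compacta, gives a uniform conditional law of large numbers for $n^{-1}l_n^\ast(\bm\theta)$ toward the same deterministic limit as $n^{-1}l_n(\bm\theta)$. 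A standard argmax argument for concave criterion functions then delivers the consistency.

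Next, define $\bm\psi_n^\ast(\bm\theta)=n^{-1/2}\partial l_n^\ast(\bm\theta)/\partial\bm\theta$, whose $r$-th block is
\[
\bm\psi_{n,r}^\ast(\bm\theta)=\frac{1}{\sqrt n}\sum_{k,j}\bigl\{\mathbf 1(k=r)-h_r(x^\ast_{kj};\bm\theta)\bigr\}\mathbf q(x^\ast_{kj}).
\]
Because, conditional on $\mathcal X_n$, the $x^\ast_{kj}$ are independent draws from the empirical distribution of $\mathcal X_k$,
\[
\E\!\left[\bm\psi_n^\ast(\widehat{\bm\theta})\,\big|\,\mathcal X_n\right]=n^{-1/2}\,\partial l_n(\widehat{\bm\theta})/\partial\bm\theta=\mathbf 0,
\]
since $\widehat{\bm\theta}$ solves the sample score equation exactly. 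Applying Lemma~\ref{preparation} to the squared entries of the summand shows that the conditional covariance of $\bm\psi_n^\ast(\widehat{\bm\theta})$ converges in probability to $\mathbf V=\mathbf W-\mathbf W\mathbf S\mathbf W$, the asymptotic covariance of $n^{-1/2}\partial l_n(\tilde{\bm\theta})/\partial\bm\theta$ underlying Lemma~\ref{le1}. The conditional Lindeberg condition is then routine under Assumption~\ref{as1}(iii) plus the boundedness of $h_r$ on a neighborhood of $\tilde{\bm\theta}$, yielding $\bm\psi_n^\ast(\widehat{\bm\theta})\mid\mathcal X_n\rightsquigarrow N(\mathbf 0,\mathbf V)$ in probability.

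Finally, the Taylor expansion $\mathbf 0=\bm\psi_n^\ast(\widehat{\bm\theta}^\ast)=\bm\psi_n^\ast(\widehat{\bm\theta})+\mathbf H_n^\ast(\bar{\bm\theta})\sqrt n(\widehat{\bm\theta}^\ast-\widehat{\bm\theta})$, with $\mathbf H_n^\ast(\bm\theta)=n^{-1}\partial^2 l_n^\ast/\partial\bm\theta\partial\bm\theta^\T$ and $\bar{\bm\theta}$ between $\widehat{\bm\theta}$ and $\widehat{\bm\theta}^\ast$, is closed by one more application of Lemma~\ref{preparation} (giving $\mathbf H_n^\ast(\bar{\bm\theta})\to-\mathbf W$ conditionally in probability, via the consistency of $\widehat{\bm\theta}^\ast$). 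Slutsky's lemma yields
\[
\sqrt n(\widehat{\bm\theta}^\ast-\widehat{\bm\theta})\,\big|\,\mathcal X_n\rightsquigarrow N(\mathbf 0,\mathbf W^{-1}\mathbf V\mathbf W^{-1})=N(\mathbf 0,\mathbf W^{-1}-\mathbf S)
\]
in probability, and Polya's theorem upgrades this (continuous Gaussian limit) to the uniform Kolmogorov statement \eqref{theta_consis}. The main obstacle is the careful bookkeeping for the conditional Lindeberg condition and the uniform conditional LLN for $\mathbf H_n^\ast$: both involve triangular arrays whose summands depend on the data $\mathcal X_n$, and Assumption~\ref{as1}(ii)--(iii) are used precisely to keep these calculations under control.
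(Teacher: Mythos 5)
Your proposal is correct, but it takes a genuinely different route from the paper. The paper linearizes \emph{both} criteria around the true parameter $\tilde{\bm\theta}$: it derives $\sqrt{n}(\widehat{\bm\theta}-\tilde{\bm\theta})=\mathbf W^{-1}n^{-1/2}\nabla l_n(\tilde{\bm\theta})+\mathbf o_p(1)$ and the analogous expansion for $\widehat{\bm\theta}^\ast$, subtracts to get $\sqrt{n}(\widehat{\bm\theta}^\ast-\widehat{\bm\theta})=\mathbf W^{-1}n^{-1/2}\{\nabla l_n^\ast(\tilde{\bm\theta})-\nabla l_n(\tilde{\bm\theta})\}+\mathbf o_p(1)$, and then invokes Theorem~23.4 of \cite{van} as a black box to handle the conditional law of the score difference. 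You instead expand the bootstrap score around $\widehat{\bm\theta}$ and exploit the identity $\E[\bm\psi_n^\ast(\widehat{\bm\theta})\mid\mathcal X_n]=n^{-1/2}\nabla l_n(\widehat{\bm\theta})=\mathbf 0$, so the bootstrap score is \emph{exactly} conditionally centered and no reference to $\tilde{\bm\theta}$ or to the score difference is needed; you then prove the conditional CLT directly via Lindeberg and close with Polya's theorem. Your route is more self-contained and the exact-centering observation is a genuine structural simplification; the paper's route is shorter on the page because the bootstrap CLT is outsourced to a cited theorem, and it reuses the expansion \eqref{thetastar} later in Theorem~\ref{the3}. Two points deserve care in a full write-up: (i) the conditional covariance and the Hessian are evaluated at the random points $\widehat{\bm\theta}$ and $\bar{\bm\theta}$, so the conditional laws of large numbers must hold uniformly over a shrinking neighborhood of $\tilde{\bm\theta}$ (continuity of $h_r$ in $\bm\theta$ plus consistency makes this routine, but it should be stated); and (ii) the Lindeberg condition requires $\E_k\|\mathbf q(X)\|^2<\infty$, which Assumption~\ref{as1}(iii) only supplies for specific bases --- though the paper's own proof tacitly assumes the same.
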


\begin{proof}
For any differentiable $f(\bm{\theta})$, write the partitioned gradient and Hessian as
\[
{\nabla f(\bm{\theta})}^\T=\Big(\tfrac{\partial f}{\partial \bm{\theta}_1}^\T,\ldots,\tfrac{\partial f}{\partial \bm{\theta}_m}^\T\Big),\qquad
\nabla^2 f(\bm{\theta})=\Big(\tfrac{\partial^2 f}{\partial \bm{\theta}_r \partial \bm{\theta}_s^\T}\Big)_{1\le r,s\le m}.
\]
For $l_n$ in \eqref{dual_like}, at $\tilde{\bm{\theta}}$,
\[
\frac{\partial l_n(\tilde{\bm{\theta}})}{\partial {\bm\theta}_r}
=\sum_{j=1}^{n_r}\!{\bf q}(x_{rj})-\sum_{k,j}\!h_r(x_{kj};\tilde{\bm{\theta}})\,{\bf q}(x_{kj})
=\sum_{k,j}\!\Big(\delta_{kr}-h_r(x_{kj};\tilde{\bm{\theta}})\Big){\bf q}(x_{kj}),
\]
\[
\frac{\partial^2 l_n(\tilde{\bm{\theta}})}{\partial \bm{\theta}_r \partial \bm{\theta}_s^\T}
=\sum_{k,j}{\bf q}(x_{kj}){\bf q}^\T(x_{kj})
\Big(\delta_{rs}h_r(x_{kj};\tilde{\bm{\theta}})-h_r(x_{kj};\tilde{\bm{\theta}})h_s(x_{kj};\tilde{\bm{\theta}})\Big).
\]
For $\|\bm{\theta}-\tilde{\bm{\theta}}\|=\mathbf{o}_p(n^{-1/3})$, 
a second-order Taylor expansion yields
\[
l_n(\bm{\theta})-l_n(\tilde{\bm{\theta}})
=\nabla l_n(\tilde{\bm{\theta}})^\T(\bm{\theta}-\tilde{\bm{\theta}})
+\tfrac12(\bm{\theta}-\tilde{\bm{\theta}})^\T \nabla^2 l_n(\tilde{\bm{\theta}}) (\bm{\theta}-\tilde{\bm{\theta}})+R_n,
\]
with $R_n=\mathbf{o}_p(1)$, since $R_n=\mathbf{O}_p\!\big(n\|\bm{\theta}-\tilde{\bm{\theta}}\|^3\big)$.
Moreover, by applying the law of large number to the Hessian blocks,
\[
\frac{1}{n}\frac{\partial^2 l_n(\tilde{\bm{\theta}})}{\partial \bm{\theta}_r \partial \bm{\theta}_s^\T}
= \mathbf{W}_{rs} + \mathbf{o}_p(1),
\qquad
\Rightarrow\qquad
\frac{1}{n}\nabla^2 l_n(\tilde{\bm{\theta}})-\mathbf{W}=\mathbf{o}_p(1).
\]
Hence the local quadratic representation
\begin{equation}
\label{ln_expansion}
l_n(\bm{\theta})-l_n(\tilde{\bm{\theta}})
= \nabla l_n(\tilde{\bm{\theta}})^\T (\bm{\theta}-\tilde{\bm{\theta}})
+ \frac{n}{2} (\bm{\theta}-\tilde{\bm{\theta}})^\T {\bf W} (\bm{\theta}-\tilde{\bm{\theta}})+\mathbf{o}_p(1).
\end{equation}
Given that \(\nabla l_n(\tilde{\bm{\theta}})\) is a sum of independent random vectors with finite moments, the central limit theorem ensures that 
 $n^{-1/2}\nabla l_n(\tilde{\bm{\theta}})\rightsquigarrow N({\bf 0},\mathbf{W}-\mathbf{WSW})$.
Hence, maximizing \eqref{ln_expansion} gives
\begin{equation}
\label{thetahat}
\sqrt{n}(\widehat{{\bm\theta}}-\tilde{{\bm \theta}})
= {\bf W}^{-1} n^{-1/2}\nabla l_n(\tilde{\bm{\theta}})+\mathbf{o}_p(1).
\end{equation}

For \(\bm{\theta} = \tilde{\bm{\theta}} + \mathbf{o}_p(n^{-1/3})\), 
the bootstrap log-likelihood \eqref{dual_boot} has Taylor expansion
\begin{equation}
l^\ast_n(\bm{\theta}) - l^\ast_n(\tilde{\bm{\theta}}) 
= 
\nabla \ell^\ast_n(\tilde{\bm{\theta}})^\T (\bm{\theta} - \tilde{\bm{\theta}}) 
+ \frac{1}{2} (\bm{\theta} - \tilde{\bm{\theta}})^\T \nabla^2 \ell^\ast_n(\tilde{\bm{\theta}}) (\bm{\theta} - \tilde{\bm{\theta}}) 
+ R^\ast_n,
\end{equation}
where \(R^\ast_n = \mathbf{o}_p(1)\).
Within this expansion, the gradient and Hessian are made of
\[
\frac{\partial l^\ast_n(\tilde{\bm{\theta}})}{\partial \bm{\theta}_r}
= \sum_{j=1}^{n_r} \mathbf{q}(x^\ast_{rj})
- \sum_{k,j} h_r(x^\ast_{kj}; \tilde{\bm{\theta}})\,\mathbf{q}(x^\ast_{kj}),
\]
\[
\frac{\partial^2 l^\ast_n(\tilde{\bm{\theta}})}{\partial \bm{\theta}_r \partial \bm{\theta}_s^\T}
= \sum_{k,j} \mathbf{q}(x^\ast_{kj}) \mathbf{q}^\T(x^\ast_{kj})
\Big( \delta_{rs} h_r(x^\ast_{kj};\tilde{\bm{\theta}})
      - h_r(x^\ast_{kj};\tilde{\bm{\theta}}) h_s(x^\ast_{kj};\tilde{\bm{\theta}}) \Big).
\]
Both of them are sum of conditional i.i.d.\ terms which validating the conclusion
of Lemma \ref{preparation}.
Hence,
\[
\frac{1}{n}\nabla^2 l^\ast_n(\tilde{\bm{\theta}})-\mathbf{W}=\mathbf{o}_p(1).
\]
A Taylor expansion then yields
\begin{equation}
\label{thetastar}
\sqrt{n}(\widehat{\bm{\theta}}^\ast-\tilde{\bm{\theta}})
= -\mathbf{W}^{-1} n^{-1/2}\nabla l^\ast_n(\tilde{\bm{\theta}}) + \mathbf{o}_p(1).
\end{equation}
Combining \eqref{thetahat}--\eqref{thetastar},
\[
\sqrt{n}(\widehat{\bm{\theta}}^\ast-\widehat{\bm{\theta}})
= \mathbf{W}^{-1} n^{-1/2}\big(\nabla l^\ast_n(\tilde{\bm{\theta}})-\nabla l_n(\tilde{\bm{\theta}})\big)+\mathbf{o}_p(1).
\]
Thus \eqref{theta_consis} follows from
\begin{equation}
\label{ln_consis}
\sup_{\bm{\theta}}
\left|
\PP\!\left\{ n^{-1/2}\big(\nabla l^\ast_n(\tilde{\bm{\theta}})-\nabla l_n(\tilde{\bm{\theta}})\big)\le \bm{\theta} \,\middle|\, \mathcal{X}_n\right\}
- \PP(\bm{Z}_1\le \bm{\theta})
\right|=\mathbf{o}_p(1),
\end{equation}
where $\bm{Z}_1\sim N(\mathbf{0},\mathbf{W}-\mathbf{WSW})$.
Since $\nabla l_n(\tilde{\bm{\theta}})$ is a sum of independent random vectors with finite moments and $\nabla l^\ast_n(\tilde{\bm{\theta}})$ is its bootstrap analogue, Theorem~23.4 of \cite{van} gives \eqref{ln_consis}, completing the proof.
\end{proof}

\subsection{Bootstrap Estimators of Distribution Functions}
\label{se3.2}

\cite{chen2013} established the asymptotic distribution of $\sqrt{n}\,\big(\widehat{F}_r(x)-F_r(x)\big)$ for fixed $x$. We extend this pointwise convergence to weak convergence of processes. 

Building on the parameter expansion in \eqref{thetahat}, the estimator $\widehat{F}_r$ admits a linearization that separates an empirical term from a smooth plug-in term involving $\widehat{\bm\theta}-\tilde{\bm\theta}$. The former yields an empirical-process component, while the latter is controlled by the asymptotic normality of $\widehat{\bm\theta}$. This decomposition leads from pointwise limits to weak convergence of the entire distribution-function process.

\begin{theorem}
\label{the2}
Under Assumption \ref{as1}, for $r=0,\ldots,m$,
\[
\sqrt{n}\,\big(\widehat{F}_r(\cdot)-F_r(\cdot)\big)\ \rightsquigarrow\ \mathcal{G}_r(\cdot)\quad\text{in }\ell^\infty(\mathbb{R}),
\]
where $\mathcal{G}_r$ is a mean-zero Gaussian process with covariance function
\begin{equation}\label{omega_r}
\omega_r(x,y)
= \sigma_r(x,y) - \rho_r^{-2}\Big\{ a_r(x\wedge y) - \mathbf{B}_r^\T(x)\,\mathbf{W}^{-1}\,\mathbf{B}_r(y) \Big\},
\end{equation}
with $x\wedge y=\min\{x,y\}$ and
\[
\sigma_r(x,y)=\rho_r^{-1}\big\{F_r(x\wedge y)-F_r(x)F_r(y)\big\},\qquad
a_r(x)=\int_{-\infty}^x\!\!\big\{h_r(t;\tilde{\bm\theta})-h_r^2(t;\tilde{\bm\theta})\big\}\,d\bar F(t),
\]
and where $\mathbf{B}_r(x)$ is an $md$-vector with $s$th $d$-segment
\[
\mathbf{B}_{r,s}(x)=\int_{-\infty}^x\!\big\{\delta_{rs}\,h_r(t;\tilde{\bm\theta})-h_r(t;\tilde{\bm\theta})h_s(t;\tilde{\bm\theta})\big\}\,\mathbf{q}(t)\,d\bar F(t),\quad s=1,\ldots,m.
\]
\end{theorem}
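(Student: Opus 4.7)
The plan is to upgrade the pointwise analysis of \cite{chen2013} to weak convergence in $\ell^\infty(\mathbb R)$ by constructing a Taylor linearization of $\widehat F_r$ about the true parameter that is valid uniformly in $x$, and then reading off the limiting process from joint weak convergence of an empirical-process term and the score. The linearization isolates a tight empirical-process contribution and a smooth plug-in correction governed by the asymptotic representation \eqref{thetahat}.

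First, I would rewrite
\[
\widehat F_r(x)=\rho_r^{-1}n^{-1}\sum_{k,j}h_r(x_{kj};\widehat{\bm\theta})\mathbf I(x_{kj}\le x),\qquad F_r(x)=\rho_r^{-1}\int_{-\infty}^x h_r(t;\tilde{\bm\theta})\,d\bar F(t),
\]
using the identity $h_r(t;\tilde{\bm\theta})\,d\bar F(t)=\rho_r\,dF_r(t)$. Applying a first-order Taylor expansion in $\bm\theta$ together with $\partial h_r/\partial\bm\theta_s=(\delta_{rs}h_r-h_rh_s)\mathbf q$ gives
\[
\rho_r\sqrt n\big(\widehat F_r(x)-F_r(x)\big)=\mathbb G_n(x)+\mathbf A_n(x)^\T\sqrt n(\widehat{\bm\theta}-\tilde{\bm\theta})+R_n(x),
\]
where
\[
\mathbb G_n(x)=n^{-1/2}\sum_{k,j}\Big\{h_r(x_{kj};\tilde{\bm\theta})\mathbf I(x_{kj}\le x)-\E_k[h_r(X;\tilde{\bm\theta})\mathbf I(X\le x)]\Big\},
\]
$\mathbf A_n(x)=n^{-1}\sum_{k,j}\nabla_{\bm\theta}h_r(x_{kj};\tilde{\bm\theta})\mathbf I(x_{kj}\le x)$, and $R_n(x)$ is the second-order Taylor remainder. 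Combining $\|\widehat{\bm\theta}-\tilde{\bm\theta}\|=O_p(n^{-1/2})$ with local boundedness of $\nabla^2_{\bm\theta}h_r$ near $\tilde{\bm\theta}$ and the moment condition in Assumption~\ref{as1}(iii), I would establish $\sup_x|R_n(x)|=o_p(1)$.

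Next, I would prove $\mathbb G_n\rightsquigarrow\mathbb G$ in $\ell^\infty(\mathbb R)$ and $\sup_x\|\mathbf A_n(x)-\mathbf B_r(x)\|=o_p(1)$. Decompose $\mathbb G_n=\sum_{k=0}^m\sqrt{\rho_k}\,\mathbb G_{n,k}$ into independent within-population empirical processes indexed by $\mathcal F=\{h_r(\cdot;\tilde{\bm\theta})\mathbf I(\cdot\le x):x\in\mathbb R\}$. The indicator collection is a VC class and $h_r(\cdot;\tilde{\bm\theta})$ is a fixed bounded multiplier, so $\mathcal F$ is $F_k$-Donsker for each $k$, yielding weak convergence of $\mathbb G_n$ to a tight mean-zero Gaussian process. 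The same class is $F_k$-Glivenko--Cantelli, which supplies the uniform approximation of $\mathbf A_n$ by $\mathbf B_r$. Moreover, $(\mathbb G_n(\cdot),U_n)$ with $U_n=n^{-1/2}\nabla l_n(\tilde{\bm\theta})$ are linear functionals of the same i.i.d.\ samples over a common Donsker-type class and converge jointly. Substituting $\sqrt n(\widehat{\bm\theta}-\tilde{\bm\theta})=\mathbf W^{-1}U_n+o_p(1)$ from \eqref{thetahat} into the linearization and applying the continuous mapping theorem gives
\[
\sqrt n\big(\widehat F_r(\cdot)-F_r(\cdot)\big)\rightsquigarrow \mathcal G_r(\cdot):=\rho_r^{-1}\big\{\mathbb G(\cdot)+\mathbf B_r(\cdot)^\T\mathbf W^{-1}\mathbf Z_1\big\},
\]
with $\mathbf Z_1\sim N(\mathbf 0,\mathbf W-\mathbf W\mathbf S\mathbf W)$ as in the proof of Theorem~\ref{the1}.

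The main obstacle is the uniform-in-$x$ control: showing $\sup_x|R_n(x)|=o_p(1)$ for the remainder and verifying the Donsker and Glivenko--Cantelli properties for $\mathcal F$, together with the joint tightness of $(\mathbb G_n,U_n)$ needed to propagate joint convergence through the linearization. Once joint weak convergence is secured, the covariance $\omega_r(x,y)=\cov(\mathcal G_r(x),\mathcal G_r(y))$ is obtained by a direct if somewhat tedious computation combining $\cov(\mathbb G(x),\mathbb G(y))$, the cross-covariances between $\mathbb G(\cdot)$ and $U_n$, and $\cov(U_n)=\mathbf W-\mathbf W\mathbf S\mathbf W$, which assemble into the expression $\sigma_r(x,y)-\rho_r^{-2}\{a_r(x\wedge y)-\mathbf B_r^\T(x)\mathbf W^{-1}\mathbf B_r(y)\}$ displayed in \eqref{omega_r}.
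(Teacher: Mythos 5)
Your proposal is correct and follows the same overall architecture as the paper's proof: both linearize $\widehat F_r$ by Taylor-expanding $h_r(\cdot;\widehat{\bm\theta})$ about $\tilde{\bm\theta}$, substitute the representation $\sqrt n(\widehat{\bm\theta}-\tilde{\bm\theta})=\mathbf W^{-1}n^{-1/2}\nabla l_n(\tilde{\bm\theta})+\mathbf{o}_p(1)$, and decompose the centered process into the same two pieces — an empirical-process term and the plug-in term $\mathbf B_r^\T(\cdot)\mathbf W^{-1}\nabla l_n(\tilde{\bm\theta})$ — before assembling the covariance from the three blocks (empirical, cross, and score). Where you genuinely diverge is in how functional convergence is secured. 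The paper proves finite-dimensional normality by the multivariate CLT and then establishes tightness by hand: it splits the increment into $A_n(x,y)$ (the parametric part, bounded via the mean value theorem and Cauchy--Schwarz, giving $\mathbf O_p(\delta^{1/2})$) and $B_n(x,y)$ (an empirical-type process controlled by a second-moment criterion of Billingsley type). You instead invoke Donsker and Glivenko--Cantelli theorems for the class $\{h_r(\cdot;\tilde{\bm\theta})\mathbf I(\cdot\le x):x\in\mathbb R\}$, which is legitimate since $0\le h_r\le 1$ is a fixed bounded multiplier on a VC class of indicators, and since $\nabla_{\bm\theta}h_r$ has envelope proportional to $\|\mathbf q\|$, integrable under Assumption~\ref{as1}(iii). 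Your route buys two things: asymptotic equicontinuity comes packaged with the Donsker property rather than requiring a bespoke increment bound, and the Glivenko--Cantelli step gives $\sup_x\|\mathbf A_n(x)-\mathbf B_r(x)\|=o_p(1)$ directly, whereas the paper only states the pointwise law of large numbers for that average in its Step~1 and relies on its separate tightness step to absorb the uniformity. The paper's route is more elementary and self-contained. One small point to flesh out in your version is the joint convergence of $(\mathbb G_n,U_n)$: since $U_n$ is a fixed finite-dimensional linear statistic of the same samples, adjoining finitely many square-integrable functions to a Donsker class preserves the Donsker property, so this is routine but worth stating.
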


\begin{proof}
The proof proceeds in three steps: 
establishing asymptotic joint normality, 
working out the covariance function \(\omega_r(x, y)\), 
and verifying tightness. 
Together, these results establish the claimed weak convergence.

\medskip
\noindent\textbf{Step 1 (finite-dimensional convergence).}
From \eqref{thetahat},
\begin{equation}
\label{eq20250415-1}
\widehat{\bm\theta}-\tilde{\bm\theta} = n^{-1}\mathbf{W}^{-1}\nabla l_n(\tilde{\bm\theta}) 
+ \mathbf{o}_p(n^{-1/2}).
\end{equation}
Recall $\widehat{F}_r(x)=n_r^{-1}\sum_{k,j} h_r(x_{kj};\widehat{\bm\theta})\,\mathbf{I}(x_{kj}\le x)$.
A Taylor expansion of $h_r(\cdot;\widehat{\bm\theta})$ at $\tilde{\bm\theta}$ gives
\[
h_r(x_{kj};\widehat{\bm\theta}) = h_r(x_{kj};\tilde{\bm\theta})
+ \dot h_r(x_{kj};\tilde{\bm\theta})^\T(\widehat{\bm\theta}-\tilde{\bm\theta})
+ \mathbf{o}_p(\|\widehat{\bm\theta}-\tilde{\bm\theta}\|),
\]
with $\dot h_r=\partial h_r/\partial\bm\theta$ at $\tilde{\bm\theta}$. 
Since $\widehat{\bm\theta}-\tilde{\bm\theta}=\mathbf{O}_p(n^{-1/2})$, the remainder is $\mathbf{o}_p(n^{-1/2})$. Hence
\[
\widehat{F}_r(x)= n_r^{-1}\!\sum_{k,j}\! h_r(x_{kj};\tilde{\bm\theta})\mathbf{I}(x_{kj}\le x)
+ n_r^{-1}\!\Big\{ n^{-1}\!\sum_{k,j}\!\dot h_r(x_{kj};\tilde{\bm\theta})\mathbf{I}(x_{kj}\le x)\Big\}^\T\!\mathbf{W}^{-1}\nabla l_n(\tilde{\bm\theta})
+ \mathbf{o}_p(n^{-1/2}).
\]
By the law of large numbers,
\[
n^{-1}\!\sum_{k,j}\!\dot h_r(x_{kj};\tilde{\bm\theta})\mathbf{I}(x_{kj}\le x)
= \mathbf{B}_r(x) + \mathbf{o}_p(1).
\]
Also
\[
F_r(x)= n_r^{-1}\,\E\!\Big[\sum_{k,j} h_r(x_{kj};\tilde{\bm\theta})\mathbf{I}(x_{kj}\le x)\Big].
\]
Define
\[
\Delta_{r1}(x)=\sum_{k,j}\!\Big\{h_r(x_{kj};\tilde{\bm\theta})\mathbf{I}(x_{kj}\le x)-\E[h_r(x_{kj};\tilde{\bm\theta})\mathbf{I}(x_{kj}\le x)]\Big\},
\quad
\Delta_{r2}(x)= \mathbf{B}_r^\T(x)\,\mathbf{W}^{-1}\nabla l_n(\tilde{\bm\theta}).
\]
Then
\[
\sqrt{n}\big(\widehat{F}_r(x)-F_r(x)\big)
= \frac{\sqrt{n}}{n_r}\,\Delta_{r1}(x) + \frac{\sqrt{n}}{n_r}\,\Delta_{r2}(x) + \mathbf{o}_p(1).
\]
For any fixed $x_1,\ldots,x_t$, the vector formed by 
$\{\Delta_{r1}(x_i)\}$ and $\{\Delta_{r2}(x_i)\}$ is a sum of independent mean-zero terms; 
by the multivariate central limit theorem, 
the finite-dimensional distributions are normal with mean zero. 
We compute the limiting covariance next.

\medskip
\noindent\textbf{Step 2 (covariance function).}
Introduce
\[
c_{rk}(x)=\int_{-\infty}^x h_r(t;\tilde{\bm\theta})\,h_k(t;\tilde{\bm\theta})\,d\bar F(t),\qquad k=0,\ldots,m.
\]

(i) For $\cov\big(\Delta_{r1}(x),\Delta_{r1}(y)\big)$, direct calculation gives
\[
\cov\big(\Delta_{r1}(x),\Delta_{r1}(y)\big)
= n\Big\{ c_{rr}(x\wedge y) - \sum_{k=0}^m \rho_k^{-1} c_{rk}(x)c_{rk}(y)\Big\}.
\]

(ii) For $\cov\big(\Delta_{r1}(x),\Delta_{r2}(y)\big)$, note that
\[
\cov\big(\Delta_{r1}(x),\nabla l_n(\tilde{\bm\theta})\big)
= -\,n\sum_{k=0}^m \rho_k^{-1} c_{rk}(x)\,\mathbf{B}_k^\T.
\]
This follows from writing $\nabla l_n(\tilde{\bm\theta})$ as a sum of centered scores and using independence across samples. Multiplying by $\mathbf{W}^{-1}\mathbf{B}_r(y)$ yields
\[
\cov\big(\Delta_{r1}(x),\Delta_{r2}(y)\big)
= -\,n\sum_{k=0}^m \rho_k^{-1} c_{rk}(x)\,\mathbf{B}_k^\T \mathbf{W}^{-1}\mathbf{B}_r(y).
\]

(iii) For $\cov(\Delta_{r2}(x),\Delta_{r2}(y))$, using $\var\{\nabla l_n(\tilde{\bm\theta})\}=n(\mathbf{W}-\mathbf{WSW})$,
\[
\cov\big(\Delta_{r2}(x),\Delta_{r2}(y)\big)
= n\,\mathbf{B}_r^\T(x)\,(\mathbf{W}^{-1}-\mathbf{S})\,\mathbf{B}_r(y).
\]

Combining (i)--(iii), dividing by $n_r^2=(\rho_r n)^2$, and using the identity
$c_{rr}(x)=\rho_r F_r(x)-a_r(x)$, one obtains \eqref{omega_r}.

\medskip
\noindent\textbf{Step 3 (tightness).}
Write
\[
\tilde{F}_r(x)= n_r^{-1}\sum_{k,j} h_r(x_{kj};\tilde{\bm\theta})\,\mathbf{I}(x_{kj}\le x).
\]
For $x\ge y$,
\[
\sqrt{n}\!\left[\big(\widehat{F}_r(x)-F_r(x)\big)-\big(\widehat{F}_r(y)-F_r(y)\big)\right]
= A_n(x,y)+B_n(x,y),
\]
where
\[
A_n(x,y)=\sqrt{n}\Big[(\widehat{F}_r(x)-\tilde{F}_r(x))-(\widehat{F}_r(y)-\tilde{F}_r(y))\Big],\
B_n(x,y)=\sqrt{n}\Big[(\tilde{F}_r(x)-F_r(x))-(\tilde{F}_r(y)-F_r(y))\Big].
\]

For $A_n$, by the mean value theorem in $\bm\theta$,
\[
\sup_{|x-y|\le\delta}\!|A_n(x,y)|
\ \le\ \big(\sqrt{n}\,\|\widehat{\bm\theta}-\tilde{\bm\theta}\|\big)\,
\sup_{|x-y|\le\delta} n_r^{-1}\!\sum_{k,j}\! C\,\|\mathbf{q}(x_{kj})\|\,\mathbf{I}(y<x_{kj}\le x).
\]
Here $C$ is a generic constant. Since $\sqrt{n}\,\|\widehat{\bm\theta}-\tilde{\bm\theta}\|=\mathbf{O}_p(1)$,
it suffices to bound the empirical increment. By the Cauchy--Schwarz inequality,
\[
n_r^{-1}\!\sum_{k,j}\|\mathbf{q}(x_{kj})\|\mathbf{I}(y<x_{kj}\le x)
\ \le\ \Big(n_r^{-1}\!\sum_{k,j}\|\mathbf{q}(x_{kj})\|^2\Big)^{1/2}
\Big(n_r^{-1}\!\sum_{k,j}\mathbf{I}(y<x_{kj}\le x)\Big)^{1/2}.
\]
The first factor is $\mathbf{O}_p(1)$ by the law of large numbers, and the second factor is $\mathbf{O}_p(\delta^{1/2})$. Thus
\[
\sup_{|x-y|\le\delta}|A_n(x,y)| =\mathbf{O}_p(\delta^{1/2}).
\]

For $B_n$, it is an empirical-type process with bounded increments. Using the moment criterion (e.g., \citealp[Thm.~13.5]{billingsley2013convergence}),
\[
\E\big[|B_n(x,y)|^2\big]=\var\!\left[\sqrt{n}(\tilde{F}_r(x)-\tilde{F}_r(y))\right]
\ \le\ C\,|F_r(x)-F_r(y)| \ \le\ C'\,|x-y|,
\]
for constants $C,C'$. Hence $B_n$ is tight. 

\medskip
\noindent\textbf{Conclusion.}
We have shown that $A_n$ is tight and $B_n$ is tight, and that the finite-dimensional convergence has been established in Step~1. Together these results prove that the process
\[
\sqrt{n}\,\big(\widehat{F}_r(\cdot)-F_r(\cdot)\big)
\]
converges weakly in $\ell^\infty(\mathbb{R})$ to the Gaussian process $\mathcal{G}_r$ with covariance function \eqref{omega_r}.
\end{proof}

\begin{remark}
The covariance formula \eqref{omega_r} can be interpreted as follows. 
The leading term $\sigma_r(x,y)$ is the natural covariance of the empirical distribution function based on group-$r$ observations. 
The adjustment term 
\[
\rho_r^{-2}\Big\{a_r(x\wedge y)-\mathbf{B}_r^\T(x)\,\mathbf{W}^{-1}\,\mathbf{B}_r(y)\Big\}
\]
arises from the fact that $\widehat{F}_r$ depends on the estimated parameter $\widehat{\bm\theta}$. 
The component $a_r(\cdot)$ accounts for additional sampling variability introduced through $h_r$, while 
the quadratic form $\mathbf{B}_r^\T(x)\mathbf{W}^{-1}\mathbf{B}_r(y)$ reflects the variance reduction due to joint estimation of $\bm\theta$. 
Thus \eqref{omega_r} captures the balance between empirical fluctuations of $F_r$ and the stabilizing effect of parameter estimation.
\end{remark}

The bootstrap estimator of $F_r$ is
\begin{equation}
\label{F_boot}
\widehat{F}_r^{\ast}(x) = n_r^{-1} \sum_{k,j} h_r(x^\ast_{kj}; \widehat{\bm{\theta}}^\ast)\,\mathbf{I}(x^\ast_{kj} \le x).
\end{equation}
The next theorem shows that the bootstrap process 
$\sqrt{n}\,\big(\widehat{F}_r^{\ast}(\cdot)-\widehat{F}_r(\cdot)\big)$
shares the same weak limit as in Theorem~\ref{the2}, thereby justifying the use of resampling-based confidence regions.

\begin{theorem}
\label{the3}
Under the conditions of Theorem \ref{the1}, for $r=0,\ldots,m$, conditionally on $\mathcal{X}_n$,
\[
\sqrt{n}\,\big(\widehat{F}_r^{\ast}(\cdot)-\widehat{F}_r(\cdot)\big)\ \rightsquigarrow\ \mathcal{G}_r(\cdot)\quad\text{in }\ell^\infty(\mathbb{R}),
\]
where $\mathcal{G}_r$ is as in Theorem \ref{the2}.
\end{theorem}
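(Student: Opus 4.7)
The plan is to run a conditional (given $\mathcal{X}_n$) analogue of the proof of Theorem \ref{the2}. First, I would introduce the oracle-parameter bootstrap
\[
\widetilde F_r^{\ast}(x) = n_r^{-1}\sum_{k,j} h_r(x_{kj}^\ast; \widehat{\bm\theta})\,\mathbf{I}(x_{kj}^\ast \le x),
\]
which satisfies $\E[\widetilde F_r^{\ast}(x)\mid \mathcal{X}_n] = \widehat F_r(x)$, and decompose
\[
\sqrt{n}(\widehat F_r^{\ast}(x) - \widehat F_r(x)) = \sqrt{n}(\widehat F_r^{\ast}(x) - \widetilde F_r^{\ast}(x)) + \sqrt{n}(\widetilde F_r^{\ast}(x) - \widehat F_r(x)).
\]
Taylor expanding $h_r(x_{kj}^\ast; \widehat{\bm\theta}^\ast)$ around $\widehat{\bm\theta}$ and invoking $\sqrt{n}(\widehat{\bm\theta}^{\ast}-\widehat{\bm\theta}) = \mathbf{O}_p(1)$ from Theorem \ref{the1} reduces the first summand to $\big(n_r^{-1}\sum_{k,j}\dot h_r(x_{kj}^{\ast};\widehat{\bm\theta})\mathbf{I}(x_{kj}^\ast\le x)\big)^\T \sqrt{n}(\widehat{\bm\theta}^{\ast}-\widehat{\bm\theta}) + \mathbf{o}_p(1)$. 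Applying Lemma \ref{preparation} conditionally, together with consistency of $\widehat{\bm\theta}$ for $\tilde{\bm\theta}$, the coefficient converges in probability to $\rho_r^{-1}\mathbf{B}_r(x)$, giving the bootstrap analogue of the $\Delta_{r2}$-term in Theorem \ref{the2}. The second summand is the bootstrap empirical analogue of the $\Delta_{r1}$-term.

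For finite-dimensional convergence at any fixed $x_1,\ldots,x_t$, both components stack into a conditional sum of independent centered bootstrap vectors with bounded second moments under Assumption \ref{as1}(iii). Theorem 23.4 of \citet{van}, already invoked in the proof of Theorem \ref{the1}, yields conditional joint asymptotic normality; the limiting covariance matches $\omega_r$ in \eqref{omega_r} because (i) the conditional variance of $n^{-1/2}\{\nabla l_n^{\ast}(\tilde{\bm\theta})-\nabla l_n(\tilde{\bm\theta})\}$ converges in probability to $\mathbf{W}-\mathbf{W}\mathbf{S}\mathbf{W}$, and (ii) the bootstrap cross-covariances between indicator-based scores and parameter scores are evaluated on the same bootstrap draws and therefore reproduce the population formulas derived in Step 2 of Theorem \ref{the2}.

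The hard part will be conditional tightness in $\ell^\infty(\mathbb{R})$. I would again split the increment into an analytic part $A_n^{\ast}(x,y)$ coming from the parameter perturbation and an empirical part $B_n^{\ast}(x,y)$ coming from the bootstrap indicator increments. The bound for $A_n^{\ast}$ mirrors Step 3 of Theorem \ref{the2}: using $\sqrt{n}\|\widehat{\bm\theta}^{\ast}-\widehat{\bm\theta}\|=\mathbf{O}_p(1)$, a conditional Cauchy--Schwarz inequality, and Lemma \ref{preparation} applied to $\|\mathbf{q}(x_{kj}^{\ast})\|^2$ and to $\mathbf{I}(y<x_{kj}^{\ast}\le x)$, one obtains $\sup_{|x-y|\le \delta}|A_n^{\ast}(x,y)| = \mathbf{O}_p(\delta^{1/2})$ conditionally. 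For $B_n^{\ast}$, I would verify the bootstrap analogue of the Billingsley moment criterion used in Theorem \ref{the2}: a direct variance computation for conditional i.i.d.\ bootstrap draws gives $\E[|B_n^{\ast}(x,y)|^2\mid\mathcal{X}_n] \le C\,|\widehat F_r(x) - \widehat F_r(y)|$, and uniform convergence of $\widehat F_r$ to the Lipschitz distribution $F_r$ then yields the same $C'|x-y|$ increment bound as before. Combining conditional tightness of $A_n^{\ast}$ and $B_n^{\ast}$ with the finite-dimensional convergence delivers weak convergence to $\mathcal{G}_r$ conditionally on $\mathcal{X}_n$, completing the proof.
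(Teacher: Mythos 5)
Your proposal is correct and follows essentially the same route as the paper's proof: the same split into a parameter-perturbation term plus a conditionally centered bootstrap empirical term, conditional finite-dimensional convergence via Theorem~23.4 of van der Vaart, and conditional tightness via the $A_n^\ast/B_n^\ast$ decomposition using Cauchy--Schwarz and a second-moment increment bound. The only (immaterial) difference is that you expand around $\widehat{\bm\theta}$, which makes your intermediate process $\widetilde F_r^\ast$ exactly conditionally unbiased for $\widehat F_r$, whereas the paper expands around $\tilde{\bm\theta}$ and centers its $\Delta^\ast_{r1}$ term at the original-sample sum evaluated at the true parameter.
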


\begin{proof}
The argument parallels that of Theorem~\ref{the2}, and we highlight only the main steps.

\textbf{Step 1 (finite-dimensional convergence).}
From \eqref{thetastar},
\[
\widehat{\bm{\theta}}^\ast-\tilde{\bm{\theta}}
= n^{-1}\mathbf{W}^{-1}\nabla l^\ast_n(\tilde{\bm{\theta}})+\mathbf{o}_p(n^{-1/2}).
\]
Expanding $h_r(x^\ast_{kj};\widehat{\bm{\theta}}^\ast)$ at $\tilde{\bm{\theta}}$ gives
\[
h_r(x^\ast_{kj};\widehat{\bm{\theta}}^\ast)
= h_r(x^\ast_{kj};\tilde{\bm{\theta}})
+ \dot h_r(x^\ast_{kj};\tilde{\bm{\theta}})^\T(\widehat{\bm{\theta}}^\ast-\tilde{\bm{\theta}})
+ \mathbf{o}_p(n^{-1/2}),
\]
so that
\[
\widehat{F}^\ast_r(x)= n_r^{-1}\!\sum_{k,j}\! h_r(x^\ast_{kj};\tilde{\bm{\theta}})\mathbf{I}(x^\ast_{kj}\le x)
+ n_r^{-1}\!\Big\{n^{-1}\!\sum_{k,j}\! \dot h_r(x^\ast_{kj};\tilde{\bm{\theta}})\mathbf{I}(x^\ast_{kj}\le x)\Big\}^\T\!\mathbf{W}^{-1}\nabla l^\ast_n(\tilde{\bm{\theta}})
+ \mathbf{o}_p(n^{-1/2}).
\]
By \eqref{op_star},
\[
n^{-1}\!\sum_{k,j}\! \dot h_r(x^\ast_{kj};\tilde{\bm{\theta}})\mathbf{I}(x^\ast_{kj}\le x)
= \mathbf{B}_r(x)+\mathbf{o}_p(1).
\]
Also $\widehat{F}_r(x)=n_r^{-1}\sum_{k,j} h_r(x_{kj};\widehat{\bm{\theta}})\mathbf{I}(x_{kj}\le x)$.
Hence
\[
\widehat{F}^\ast_r(x)-\widehat{F}_r(x)= n_r^{-1}\Delta^\ast_{r1}(x) + n_r^{-1}\Delta^\ast_{r2}(x) + \mathbf{o}_p(n^{-1/2}),
\]
where
\[
\Delta^\ast_{r1}(x)=\sum_{k,j}\!\Big\{h_r(x^\ast_{kj};\tilde{\bm{\theta}})\mathbf{I}(x^\ast_{kj}\le x)
- h_r(x_{kj};\tilde{\bm{\theta}})\mathbf{I}(x_{kj}\le x)\Big\},
\;
\Delta^\ast_{r2}(x)=\mathbf{B}_r^\T(x)\,\mathbf{W}^{-1}\big\{\nabla l^\ast_n(\tilde{\bm{\theta}})-\nabla l_n(\tilde{\bm{\theta}})\big\}.
\]
By Theorem~23.4 of \cite{van}, conditionally on $\mathcal{X}_n$ the finite-dimensional distributions of $n^{-1/2}\Delta^\ast_{ri}(\cdot)$ ($i=1,2$) converge to those of their population counterparts, yielding the same limiting covariance $\omega_r(x,y)$ as in Theorem~\ref{the2}.

\medskip
\textbf{Step 2 (tightness).}
Let $\tilde{F}^\ast_r(x)=n_r^{-1}\sum_{k,j} h_r(x^\ast_{kj};\tilde{\bm{\theta}})\mathbf{I}(x^\ast_{kj}\le x)$.
For $x\ge y$,
\[
\sqrt{n}\!\left[\big(\widehat{F}^\ast_r(x)-\widehat{F}_r(x)\big)-\big(\widehat{F}^\ast_r(y)-\widehat{F}_r(y)\big)\right]
= A^\ast_n(x,y)+B^\ast_n(x,y),
\]
with
\[
A^\ast_n(x,y)=\sqrt{n}\big[(\widehat{F}^\ast_r-\tilde{F}^\ast_r)(x)-(\widehat{F}^\ast_r-\tilde{F}^\ast_r)(y)\big],\quad
B^\ast_n(x,y)=\sqrt{n}\big[(\tilde{F}^\ast_r-F_r)(x)-(\tilde{F}^\ast_r-F_r)(y)\big].
\]

For $A^\ast_n$, conditioning on $\mathcal{X}_n$ and using the mean value theorem,
\[
\sup_{|x-y|\le\delta}|A^\ast_n(x,y)| \le \big(\sqrt{n}\,\|\widehat{\bm\theta}^\ast-\tilde{\bm\theta}\|\big)\,
\sup_{|x-y|\le\delta} n_r^{-1}\sum_{k,j} C\,\|\mathbf{q}(x^\ast_{kj})\|\mathbf{I}(y<x^\ast_{kj}\le x).
\]
By \eqref{thetastar}, $\sqrt{n}\,\|\widehat{\bm\theta}^\ast-\tilde{\bm\theta}\|=\mathbf{O}_p(1)$.
For the empirical increment, apply Cauchy--Schwarz:
\[
n_r^{-1}\sum_{k,j}\|\mathbf{q}(x^\ast_{kj})\|\mathbf{I}(y<x^\ast_{kj}\le x)
\ \le\ \Big(n_r^{-1}\sum_{k,j}\|\mathbf{q}(x^\ast_{kj})\|^2\Big)^{1/2}
\Big(n_r^{-1}\sum_{k,j}\mathbf{I}(y<x^\ast_{kj}\le x)\Big)^{1/2}.
\]
The first factor is $O_p(1)$ conditionally, and the second is $O_p(\delta^{1/2})$. Thus
\[
\sup_{|x-y|\le\delta}|A^\ast_n(x,y)|=O_p(\delta^{1/2}).
\]

For $B^\ast_n$, a conditional variance bound analogous to Step~3 of Theorem~\ref{the2} gives
\[
\E^\ast\big[|B^\ast_n(x,y)|^2\big]\ \le\ C\,|\widehat{F}_r(x)-\widehat{F}_r(y)| \ \le\ C'\,|x-y|,
\]
in probability, where $\E^\ast$ denotes conditional expectation given $\mathcal{X}_n$.
Thus $B^\ast_n$ is conditionally tight. 

\medskip
\textbf{Conclusion.}
Both $A^\ast_n$ and $B^\ast_n$ are tight, and Step~1 established conditional finite-dimensional convergence with the same covariance as in Theorem~\ref{the2}. Therefore,
\[
\sqrt{n}\,\big(\widehat{F}_r^\ast(\cdot)-\widehat{F}_r(\cdot)\big)
\]
converges weakly in $\ell^\infty(\mathbb{R})$ to $\mathcal{G}_r$, validating the bootstrap procedure.
\end{proof}

\begin{remark}
Theorem~\ref{the3} establishes that the bootstrap reproduces the same Gaussian limit as the original process. 
This guarantees that confidence bands or regions for $F_r$ constructed by resampling are asymptotically valid, 
providing a practical inference tool that requires no further analytic approximation beyond Theorem~\ref{the2}.
\end{remark}

\section{Applications}
\label{se4}
We apply the preceding theory to several inference problems under the density ratio model (DRM), showing that the bootstrap procedures used in prior work are theoretically valid and consistent with our results.

\subsection{Quantile Functions}
We define the quantile function as $Q(p)=\inf\{x:F(x)\ge p\}$, and denote by $\widehat{Q}_r(p)$ and $\widehat{Q}_r^\ast(p)$ the DRM and bootstrap estimators of $Q_r(p)$, respectively. While \cite{chen2013} obtained the limiting distribution of $\widehat{Q}_r(p)$ (via a Bahadur representation) at fixed $p$, our framework extends this to the entire quantile process. Building on Theorem~\ref{the2}, we establish weak convergence of $\sqrt{n}(\widehat{Q}_r-Q_r)$ and its bootstrap analog in $L_1[0,1]$.

\medskip
\noindent
\emph{Context.} Working in $L_1[0,1]$ is both natural and useful: for unbounded supports, $Q_r$ itself can be unbounded, yet most target statistics are integrals of $Q_r$ over $p\in(0,1)$. Weak convergence in $L_1[0,1]$ is therefore sufficient (via the continuous mapping theorem) for inference on such functionals. 

A direct application of the functional delta method from $\ell^\infty(\mathbb{R})$ to $L_1[0,1]$ is obstructed because the inverse map $\phi:F\mapsto Q$ is not Hadamard differentiable under the sup norm \citep{kaji}. To circumvent this, we proceed in two steps. First, we strengthen Theorem~\ref{the2} to obtain weak convergence of $\sqrt{n}(\widehat{F}_r-F_r)$ in the space
\[
\mathbb{L}=\Big\{f:\mathbb{R}\to\mathbb{R}\ \text{bounded and integrable}\Big\},\qquad
\|f\|_{\mathbb{L}}:=\|f\|_\infty+\int_{\mathbb{R}}|f(x)|\,dx.
\]
Second, we invoke \cite{kaji}, Theorem 1.3, which shows that $\phi$ is Hadamard differentiable from $\mathbb{L}$ to $L_1[0,1]$, thereby enabling the functional delta method. The bootstrap result follows analogously.

\begin{theorem}
\label{the4}
Under the conditions of Theorem \ref{the1}, for $r=0,\ldots,m$ and $p\in(0,1)$,
\begin{equation}
\label{quant}
\sqrt{n}\,\big(\widehat{Q}_r(p)-Q_r(p)\big)\ \rightsquigarrow\ -\,Q_r'(p)\,\mathcal{G}_r\!\big(Q_r(p)\big)\quad\text{in }L_1[0,1],
\end{equation}
where $Q_r'(p)$ is the derivative of $Q_r(p)$, and $\mathcal{G}_r$ is from Theorem~\ref{the2}. Furthermore, conditionally on $\mathcal{X}_n$,
\[
\sqrt{n}\,\big(\widehat{Q}_r^\ast(p)-\widehat{Q}_r(p)\big)\ \rightsquigarrow\ -\,Q_r'(p)\,\mathcal{G}_r\!\big(Q_r(p)\big)\quad\text{in }L_1[0,1].
\]
\end{theorem}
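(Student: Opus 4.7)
The plan, anticipated in the paragraph preceding the theorem, is to convert the process-level result of Theorem~\ref{the2} into a statement about the quantile process via the functional delta method applied to $\phi:F\mapsto Q$. Because $\phi$ is not Hadamard differentiable with respect to the sup norm, I route the argument through the hybrid space $\mathbb{L}$ in two steps: (i) upgrade the weak convergence $\sqrt{n}(\widehat{F}_r-F_r)\rightsquigarrow\mathcal{G}_r$ from $\ell^\infty(\mathbb{R})$ to $\mathbb{L}$; (ii) apply Theorem~1.3 of \cite{kaji}, which gives Hadamard differentiability of $\phi:\mathbb{L}\to L_1[0,1]$. The bootstrap half of the theorem is then obtained by repeating both steps conditionally on $\mathcal{X}_n$, starting from Theorem~\ref{the3}.

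\textbf{Upgrading to $\mathbb{L}$.}
Theorem~\ref{the2} already provides tightness and weak convergence in $\ell^\infty(\mathbb{R})$, and the Gaussian limit $\mathcal{G}_r$ has sample paths that decay at infinity, so $\mathcal{G}_r\in\mathbb{L}$ almost surely. The new ingredient is tightness of $\sqrt{n}(\widehat{F}_r-F_r)$ in the $L_1$-component of $\|\cdot\|_{\mathbb{L}}$, i.e.\ showing that for every $\varepsilon>0$,
\[
\lim_{M\to\infty}\limsup_{n\to\infty}\PP\!\left(\int_{|x|>M}\sqrt{n}\,|\widehat{F}_r(x)-F_r(x)|\,dx>\varepsilon\right)=0.
\]
I would use the decomposition of $\widehat{F}_r-F_r$ from Step~1 of Theorem~\ref{the2} into an empirical-process term $n_r^{-1}\Delta_{r1}(\cdot)$ and a smooth plug-in term $n_r^{-1}\mathbf{B}_r^\T(\cdot)\mathbf{W}^{-1}\nabla l_n(\tilde{\bm\theta})$. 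The second piece is a bounded random vector multiplied by the deterministic function $\mathbf{B}_r(\cdot)$, whose integrable tails follow from $\E_k\|\mathbf{q}(X)\|^2<\infty$. The first piece is handled by a variance bound combined with the finite second moment of $F_r$ in Assumption~\ref{as1}(iii). Together with the $\ell^\infty$-tightness already in hand, this upgrades the convergence to $\mathbb{L}$.

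\textbf{Delta method and bootstrap.}
With $\mathbb{L}$-convergence in place, Theorem~1.3 of \cite{kaji} delivers Hadamard differentiability of $\phi$ at $F_r$ tangentially to the continuous directions, with derivative
\[
\phi'_{F_r}(G)(p)=-\frac{G(Q_r(p))}{f_r(Q_r(p))}=-Q'_r(p)\,G(Q_r(p)),
\]
using $Q'_r(p)=1/f_r(Q_r(p))$. The functional delta method (\citealp[Thm.~20.8]{van}) then yields \eqref{quant}. The bootstrap statement is established by the same route: Theorem~\ref{the3} combined with a conditional version of the tail bound---which holds because conditional bootstrap moments match sample moments (themselves $\mathbf{O}_p(1)$) by Lemma~\ref{preparation}---gives $\mathbb{L}$-convergence conditionally on $\mathcal{X}_n$, and the bootstrap delta method (\citealp[Thm.~23.9]{van}) completes the proof.

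\textbf{Main obstacle.}
The delicate step is the tail control needed to strengthen Theorem~\ref{the2} from $\ell^\infty$ to $\mathbb{L}$. Unlike an ordinary empirical CDF, $\widehat{F}_r$ reweights observations from \emph{all} groups through $h_r(\cdot;\widehat{\bm\theta})$, so its tails are not inherited from one group's empirical distribution, and one must verify that the estimated weights do not inflate the integrated tail mass. Doing this under the moment conditions of Assumption~\ref{as1}(iii) is the most technical part of the argument, and the same bounds then have to be re-derived in conditional probability for the bootstrap version.
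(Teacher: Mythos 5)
Your skeleton coincides with the paper's: upgrade Theorem~\ref{the2} from $\ell^\infty(\mathbb{R})$ to $\mathbb{L}$, invoke Kaji's Theorem~1.3 for Hadamard differentiability of $F\mapsto Q$ into $L_1[0,1]$ with derivative $-Q_r'(p)\,h(Q_r(p))$, apply the functional delta method, and repeat conditionally on $\mathcal{X}_n$ via Theorem~\ref{the3}. The divergence is in the one step that carries the technical weight: the upgrade to $\mathbb{L}$. The paper verifies the condition it quotes from Kaji's Theorem~1.1, namely that $\sup_x\int_{x-\delta}^{x+\delta}\sqrt{n}\,|\widehat F_r(y)-F_r(y)|\,dy$ is uniformly small for small $\delta$; this is essentially free, since the window integral is bounded by $2\delta\cdot\sqrt{n}\sup_y|\widehat F_r(y)-F_r(y)|=\mathbf{O}_p(\delta)$. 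You instead propose to control the tail mass $\int_{|x|>M}\sqrt{n}\,|\widehat F_r(x)-F_r(x)|\,dx$, which is a different facet of $L_1$-tightness and cannot be extracted from the sup-norm rate. Your treatment of the empirical term $n_r^{-1}\Delta_{r1}$ there (``a variance bound combined with the finite second moment'') conceals the real difficulty: the natural bound is $\E\int_{|x|>M}n^{-1/2}|\Delta_{r1}(x)|\,dx\lesssim\int_{|x|>M}\sqrt{F_r(x)(1-F_r(x))}\,dx$, and finiteness of $\int\sqrt{F_r(1-F_r)}\,dx$ is strictly stronger than Assumption~\ref{as1}(iii)'s finite second moment. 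So either the tail condition is not what Kaji's criterion requires (in which case you have verified the wrong condition and omitted the one the paper checks), or it is required (in which case your sketch has a genuine hole at its central estimate). The remainder of your plan --- the form of the Hadamard derivative, the delta method, and the conditional repetition for the bootstrap --- matches the paper.
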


\begin{proof}
\textbf{(1) Convergence in $\mathbb{L}$.}\ 
By Theorem~\ref{the2}, $\sqrt{n}(\widehat{F}_r-F_r)\rightsquigarrow\mathcal{G}_r$ in $\ell^\infty(\mathbb{R})$. To strengthen this to $\mathbb{L}$, we verify the equal-integrability condition of \cite{kaji}, Theorem 1.1: for every $\eta,\epsilon>0$ there exists $\delta>0$ such that
\begin{equation}
\label{eq4.1.1}
\limsup_{n\to\infty}\PP\!\left(\sup_x \int_{x-\delta}^{x+\delta}\Big|\sqrt{n}\big(\widehat{F}_r(y)-F_r(y)\big)\Big|\,dy>\eta\right)\le\epsilon.
\end{equation}
We illustrate for $r=0$; other groups are analogous. Write
\[
\widehat{F}_0-F_0=(\widehat{F}_0-\tilde{F}_0)+(\tilde{F}_0-F_0),
\qquad
\tilde{F}_0(x)=n_0^{-1}\sum_{k,j}h_0(x_{kj};\tilde{\bm\theta})\,\mathbf{I}(x_{kj}\le x).
\]
A mean-value expansion of $h_0(\cdot;\widehat{\bm\theta})$ about $\tilde{\bm\theta}$ shows
\[
\sup_y|\widehat{F}_0(y)-\tilde{F}_0(y)| 
= \mathbf{O}_p(\|\widehat{\bm\theta}-\tilde{\bm\theta}\|) 
= \mathbf{O}_p(n^{-1/2}),
\]
under $\E\|\mathbf{q}(X)\|<\infty$. Standard empirical-process bounds yield $\sup_y|\tilde{F}_0(y)-F_0(y)|=O_p(n^{-1/2})$. Consequently,
\[
\sup_x\int_{x-\delta}^{x+\delta}\!\Big|\sqrt{n}\big(\widehat{F}_0(y)-F_0(y)\big)\Big|\,dy
\ \le\ 2\sqrt{n}\delta\,\sup_y|\widehat{F}_0(y)-F_0(y)|=\mathbf{O}_p(\delta).
\]
Choosing $\delta$ sufficiently small ensures \eqref{eq4.1.1}. Thus $\sqrt{n}(\widehat{F}_r-F_r)\rightsquigarrow\mathcal{G}_r$ in $\mathbb{L}$.

\medskip
\textbf{(2) Functional delta method.}\ 
By \cite{kaji}, Theorem 1.3, the map $\phi:\mathbb{L}\to L_1[0,1]$, $F\mapsto Q$, is Hadamard differentiable at $F_r$ with derivative
\[
\phi'_{F_r}(h)(p)= -\frac{h(Q_r(p))}{F_r'(Q_r(p))} = -\,Q_r'(p)\,h(Q_r(p)).
\]
Applying the functional delta method \citep[Theorem 2.8]{Kosorok2008} to $\sqrt{n}(\widehat{F}_r-F_r)$ yields \eqref{quant}.

\medskip
\textbf{(3) Bootstrap.}\ 
A conditional analog of Step (1) (using bootstrap maximal inequalities; e.g., \citealt{ahmed2001almost}) shows $\sqrt{n}(\widehat{F}_r^\ast-\widehat{F}_r)\rightsquigarrow \mathcal{G}_r$ in $\mathbb{L}$, conditionally on $\mathcal{X}_n$. The bootstrap functional delta method \citep[Theorem 2.9]{Kosorok2008} then yields the stated $L_1[0,1]$ limit for $\sqrt{n}(\widehat{Q}_r^\ast-\widehat{Q}_r)$.
\end{proof}

\begin{remark}
Theorem~\ref{the4} extends classical quantile asymptotics to the DRM setting at the process level. 
The appearance of $-Q_r'(p)\,\mathcal{G}_r(Q_r(p))$ in \eqref{quant} mirrors the familiar Bahadur representation, 
but now the result holds uniformly in $p$ and in the $L_1$ sense. 
The bootstrap statement ensures that resampling-based inference for quantiles inherits the same validity as for distribution functions, 
making procedures such as percentile bootstrap confidence bands theoretically justified.
\end{remark}

\subsection{Dominance Index}
Let $F_0$ and $F_1$ be two DRM distributions with densities $f_0,f_1$ and quantile functions $Q_0,Q_1$. The dominance index of $F_0$ over $F_1$ is
\[
\tilde{\gamma}:=\gamma(F_0,F_1)=\mu\{\,p\in(0,1): Q_0(p)>Q_1(p)\,\},
\]
where $\mu$ is Lebesgue measure. As it aggregates where $F_0$ exceeds $F_1$ across quantile levels, $\tilde{\gamma}$ is robust to extremes and interpretable through stochastic-dominance intuition. When $F_0$ and $F_1$ cross finitely many times, \citet{zhuang} consider the plug-in estimator
\[
\widehat{\gamma}=\gamma(\widehat{F}_0,\widehat{F}_1)=\mu\{\,p\in(0,1): \widehat{Q}_0(p)>\widehat{Q}_1(p)\,\}.
\]
We aim to match the limiting distribution of $\widehat{\gamma}$ with its bootstrap counterpart.

\begin{lemma}
\label{le2}
(\citealp{zhuang})
Assume $F_0(x)=F_1(x)$ at finitely many $x$, with $f_0(x)\neq f_1(x)$ at each crossing. 
Let $[x_L,x_U]$ be the common support of $F_0$ and $F_1$, and suppose
\begin{equation}
\label{as2}
\lim_{x\to x_L}\frac{f_1(x)}{f_0(x)}\neq 1,\qquad \lim_{x\to x_U}\frac{f_1(x)}{f_0(x)}\neq 1.
\end{equation}
Then $n^{1/2}\{\widehat{\gamma}-\tilde{\gamma}\}\rightsquigarrow N(0,\sigma^2)$ for a variance $\sigma^2$ (available in closed form, though algebraically involved).
\end{lemma}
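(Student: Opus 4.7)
The plan is to reduce $\sqrt{n}(\widehat{\gamma}-\tilde{\gamma})$ to an explicit linear functional of the joint process $\sqrt{n}(\widehat{F}_0-F_0,\widehat{F}_1-F_1)$ evaluated at the finitely many crossings, and then invoke the finite-dimensional convergence portion of Theorem~\ref{the2}. The functional delta method does not apply directly, because the map $(F_0,F_1)\mapsto \gamma(F_0,F_1)$ is discontinuous under $\|\cdot\|_\infty$: the indicator $\mathbf{I}(F_1>F_0)$ can jump under arbitrarily small sup-norm perturbations. The transversality condition $f_0(x_i)\neq f_1(x_i)$ together with the boundary condition \eqref{as2} is precisely what tames this.

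First I would rewrite the dominance index in $x$-coordinates. The substitution $p=F_0(x)$ gives the identity
$$\tilde{\gamma}=\int \mathbf{I}(F_1(x)>F_0(x))\,dF_0(x)=\sum_{i=0}^{K}\epsilon_i\bigl[F_0(x_{i+1})-F_0(x_i)\bigr],$$
where $x_1<\cdots<x_K$ are the interior crossings, $x_0=x_L$, $x_{K+1}=x_U$, and $\epsilon_i\in\{0,1\}$ records whether $F_1>F_0$ on $(x_i,x_{i+1})$. An analogous representation holds for $\widehat{\gamma}$ with estimated crossings $\widehat{x}_i$. The uniform convergence of $\widehat{F}_r-F_r$ implicit in Theorem~\ref{the2}, combined with \eqref{as2} excluding crossings accumulating at the endpoints, guarantees that with probability tending to one there are exactly $K$ estimated crossings in one-to-one correspondence with the true ones and $\widehat{\epsilon}_i=\epsilon_i$; this is the step that replaces a delta-method argument and is where \eqref{as2} enters.

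Next I would localize each crossing. Because $\widehat{F}_0(\widehat{x}_i)-\widehat{F}_1(\widehat{x}_i)=0$ and $f_1(x_i)-f_0(x_i)\neq 0$, a first-order expansion and the implicit function theorem give
$$\sqrt{n}(\widehat{x}_i-x_i)=\frac{\sqrt{n}\{(\widehat{F}_0(x_i)-F_0(x_i))-(\widehat{F}_1(x_i)-F_1(x_i))\}}{f_1(x_i)-f_0(x_i)}+o_p(1).$$
A Taylor expansion of $\widehat{F}_0$ around $x_i$ then yields
$$\widehat{F}_0(\widehat{x}_i)-F_0(x_i)=\frac{f_1(x_i)\{\widehat{F}_0(x_i)-F_0(x_i)\}-f_0(x_i)\{\widehat{F}_1(x_i)-F_1(x_i)\}}{f_1(x_i)-f_0(x_i)}+o_p(n^{-1/2}).$$
Substituting this into the difference of the representations displayed above expresses $\sqrt{n}(\widehat{\gamma}-\tilde{\gamma})$ as an explicit, fixed linear combination of the $2K$-dimensional vector $\{\sqrt{n}(\widehat{F}_r(x_i)-F_r(x_i)):r=0,1,\ i=1,\ldots,K\}$ with an $o_p(1)$ remainder.

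Finally, the finite-dimensional convergence step (Step~1) in the proof of Theorem~\ref{the2} delivers joint asymptotic normality of that vector across populations $r=0,1$ and points $x_1,\ldots,x_K$, since all components share the same underlying score expansion $n^{-1/2}\nabla l_n(\tilde{\bm\theta})$. The Cramér--Wold device and the continuous mapping theorem then give $n^{1/2}(\widehat{\gamma}-\tilde{\gamma})\rightsquigarrow N(0,\sigma^2)$, with $\sigma^2$ assembled from the coefficients $f_0(x_i),f_1(x_i)$ and the covariance kernels $\omega_r(x,y)$ of Theorem~\ref{the2} together with the cross-covariance between groups $0$ and $1$. The principal obstacle is Step~1 above: the geometric/combinatorial control of the estimated crossings, where the smoothness and transversality hypotheses are indispensable; once that structural step is secured, the remainder reduces to a routine linearization and an application of the multivariate CLT already embedded in Theorem~\ref{the2}.
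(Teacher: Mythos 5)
The paper does not actually prove Lemma~\ref{le2}; it imports it from \citealp{zhuang}, and the closest in-house argument is the proof of the bootstrap analogue, Theorem~\ref{the6}, which works in the quantile domain: after the probability integral transform each crossing is characterized as the argmax of the integrated contrast $\Phi(u)=\int\{Q_0(t)-Q_1(t)\}\,dt$ and is linearized via an argmax delta method in terms of the quantile-difference process $W_n$. Your route is the CDF-domain mirror image: you localize the crossings of $\widehat F_0,\widehat F_1$ directly by the implicit function theorem and express everything through $\sqrt n(\widehat F_r(x_i)-F_r(x_i))$. The two linearizations agree --- your expression $\{f_1(\widehat F_0-F_0)-f_0(\widehat F_1-F_1)\}/(f_1-f_0)$ at $x_i$ is exactly $-C\,W_n(\tilde x_i)$ after the Bahadur substitution $\widehat Q_r-Q_r\approx -(\widehat F_r-F_r)/f_r$ --- and your version has the advantage of needing only the finite-dimensional normality and tightness from Theorem~\ref{the2} (plus the cross-population joint covariance, which Step~1 there supplies) rather than the $L_1$ quantile-process result of Theorem~\ref{the4}. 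Two small repairs: $\widehat F_0$ is a step function, so the ``Taylor expansion of $\widehat F_0$ around $x_i$'' must be routed through $F_0$ using the asymptotic equicontinuity of $\sqrt n(\widehat F_0-F_0)$ (available from Step~3 of Theorem~\ref{the2}); and the existence/uniqueness of a single $\widehat x_i$ near each $x_i$ is not free --- it is the monotonicity argument of the paper's Lemma~\ref{le4}.

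The one genuine soft spot --- shared, to be fair, with the paper's own Lemma~\ref{le3} and Theorem~\ref{the6} --- is the boundary. Near $x_L$ and $x_U$ the gap $F_1-F_0$ tends to zero, so uniform $O_p(n^{-1/2})$ convergence only rules out spurious sign changes outside a strip on which $F_0(x)\lesssim n^{-1/2}$ (or $b_n=n^{-1/2}\log n$). The $F_0$-measure of that strip is of order $n^{-1/2}$ (or $n^{-1/2}\log n$), which is \emph{not} $o(n^{-1/2})$, so ``\eqref{as2} excludes crossings accumulating at the endpoints'' does not by itself show that the misclassified mass is negligible at the $\sqrt n$ scale. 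One must argue further, e.g.\ that inside the strip the set where $\widehat F_1-\widehat F_0$ has the wrong sign has $F_0$-measure $o_p(n^{-1/2})$, using a local modulus bound of the form $|(\widehat F_1-\widehat F_0)(x)-(F_1-F_0)(x)|=O_p(\sqrt{F_0(x)/n})$ against the deterministic gap $|F_1(x)-F_0(x)|\ge\delta F_0(x)$ guaranteed by \eqref{as2}. This is precisely the step the paper delegates to the supplement of \citealp{zhuang}; your crossing-correspondence step needs it spelled out (or cited) before the $o_p(n^{-1/2})$ remainder is legitimate.
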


\medskip
\noindent\emph{Remark (motivation).}
While Lemma~\ref{le2} yields a CLT, $\sigma^2$ depends on unknown features of $(F_0,F_1)$ and is cumbersome to estimate directly. Leveraging our process-level results for $F_r$ and $Q_r$ (Theorems~\ref{the2}--\ref{the4}), we instead validate a bootstrap that \emph{automatically} reproduces the limit law of $\widehat{\gamma}$, avoiding explicit variance estimation.

Next, we give two additional lemmas.

\begin{lemma}[Localization of bootstrap crossings]
\label{le3}
Assume the conditions of Lemma~\ref{le2}. Work under the probability integral transform so that $F_0(u)=u$ on $[0,1]$, and \eqref{as2} becomes $f_1(0)\neq 1$ and $f_1(1)\neq 1$. Let $b_n:=n^{-1/2}\log n \to 0$. If $\tilde{x}$ is the first crossing point of $F_0$ and $F_1$ in $(0,1)$, then, with probability tending to one (conditionally for the bootstrap),
\[
\widehat{F}^\ast_0 \ \text{and}\ \widehat{F}^\ast_1 \ \text{do not cross on}\ (b_n,\ \tilde{x}-\epsilon)
\]
for any fixed small $\epsilon>0$. Symmetrically, if $\tilde{y}$ is the last crossing in $(0,1)$, then there is no bootstrap crossing on $(\tilde{y}+\epsilon,\ 1-b_n)$ with probability tending to one.
\end{lemma}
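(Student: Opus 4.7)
The plan is to reduce the no-crossing claim to a comparison between the population separation $|F_0(u)-F_1(u)|$ and the uniform bootstrap estimation error, and then check that the $\log n$ margin built into $b_n$ makes the former dominate the latter. Because $\tilde{x}$ is the first crossing, the continuous function $F_0-F_1$ keeps a constant nonzero sign on $(0,\tilde{x})$; assume without loss of generality $F_0>F_1$ there. If on $(b_n,\tilde{x}-\epsilon)$ one can show that $\inf\{F_0-F_1\}$ exceeds $\sup_u|\widehat F_0^\ast(u)-F_0(u)|+\sup_u|\widehat F_1^\ast(u)-F_1(u)|$, then $\widehat F_0^\ast-\widehat F_1^\ast$ inherits the sign of $F_0-F_1$ throughout the interval, precluding any bootstrap crossing.

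For the lower bound I would split the interval at a fixed $\delta\in(0,\tilde{x}-\epsilon)$ small enough that the Taylor expansion $F_1(u)=f_1(0)u+o(u)$ controls $F_1$ on $[0,\delta]$. On $[\delta,\tilde{x}-\epsilon]$ the continuous strictly positive function $F_0-F_1$ attains a positive minimum $c_1>0$. On $(b_n,\delta]$, using $F_0(u)=u$,
\[
F_0(u)-F_1(u)=(1-f_1(0))\,u+o(u)\ \ge\ c_0\,u\ \ge\ c_0\,b_n,
\]
with $c_0=|1-f_1(0)|/2>0$ thanks to the boundary condition $f_1(0)\neq 1$. Hence the infimum of $F_0-F_1$ on $(b_n,\tilde{x}-\epsilon)$ is of order $b_n=n^{-1/2}\log n$. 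The symmetric interval near the last crossing is handled by the analogue expansion $F_0(u)-F_1(u)=(f_1(1)-1)(1-u)+o(1-u)$ near $u=1$, giving $|F_0-F_1|\ge c\,(1-u)\ge c\,b_n$ on $(\tilde{y}+\epsilon,1-b_n)$.

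For the error bound I would combine the uniform rate $\sup_u|\widehat F_r(u)-F_r(u)|=\mathbf{O}_p(n^{-1/2})$ from Theorem~\ref{the2} with its bootstrap counterpart $\sup_u|\widehat F_r^\ast(u)-\widehat F_r(u)|=\mathbf{O}_p(n^{-1/2})$ from Theorem~\ref{the3}, both uniform in $r\in\{0,1\}$ and the latter conditional on $\mathcal{X}_n$. Adding the two yields $\sup_u|\widehat F_r^\ast(u)-F_r(u)|=\mathbf{O}_p(n^{-1/2})$, which is a factor $1/\log n$ smaller than the separation $\ge c\,b_n$ obtained above. Thus on $(b_n,\tilde{x}-\epsilon)$ (and on $(\tilde{y}+\epsilon,1-b_n)$ by symmetry) the sign of $\widehat F_0^\ast-\widehat F_1^\ast$ matches that of $F_0-F_1$ with conditional probability tending to one. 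The main technical obstacle is passing from the weak convergence statement in Theorem~\ref{the3} to the in-probability uniform bound $\sup_u|\widehat F_r^\ast-\widehat F_r|=\mathbf{O}_p(n^{-1/2})$: because $b_n$ is only a factor $\log n$ above the typical fluctuation, a genuine stochastic-boundedness statement of the supremum is needed, not merely distributional convergence. This is available from the tightness argument executed in Step~3 of Theorem~\ref{the3} together with the continuity of the limiting Gaussian process in the sup norm, and once invoked the argument above goes through unchanged.
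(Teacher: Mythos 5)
Your proposal is correct and follows essentially the same route as the paper's own proof: both arguments split the interval into a boundary strip where the boundary condition $f_1(0)\neq 1$ yields a linear lower bound $|F_0(u)-F_1(u)|\ge c\,u\ge c\,b_n$, and a remaining piece where the separation is bounded away from zero, and then dominate the uniform $\mathbf{O}_p(n^{-1/2})$ estimation error (original plus bootstrap) by the extra $\log n$ factor in $b_n$. Your explicit remark that the weak convergence in Theorem~\ref{the3} must be upgraded to a stochastic-boundedness statement for the supremum is a fair point the paper glosses over, but it is resolved exactly as you say (continuous mapping applied to the sup norm), so the arguments coincide.
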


\begin{proof}
We prove the claim to the left of the first crossing; the right-side statement is analogous. Since $F_0(u)=u$ and $f_1(0)\neq 1$, continuity implies the existence of $\delta>0$ and $\eta\in(0,\epsilon)$ such that
\[
F_1(u)-F_0(u)=F_1(u)-u \ \ge\ \delta\,u \quad \text{for all } u\in[0,\eta].
\]
From Theorem~\ref{the2} and its bootstrap version (Theorem~\ref{the3}), we have the uniform conditional rate
\[
\sup_{u\in[0,1]}\big|\widehat{F}_r^\ast(u)-F_r(u)\big|=\mathbf{O}_p(n^{-1/2}),\qquad r=0,1,
\]
hence, uniformly in $u\in[0,1]$,
\[
\widehat{F}^\ast_1(u)-\widehat{F}^\ast_0(u)
=\{F_1(u)-F_0(u)\}+R_n^\ast(u),
\qquad \sup_{u}|R_n^\ast(u)|=\mathbf{O}_p(n^{-1/2}).
\]
Fix $n$ large so that $b_n<\eta$ and $|R_n^\ast(u)|\le \tfrac{1}{2}\delta b_n$ with conditional probability at least $1-\varepsilon$ (for any fixed $\varepsilon>0$). Then, for all $u\in[b_n,\eta]$,
\[
\widehat{F}^\ast_1(u)-\widehat{F}^\ast_0(u)
\ \ge\ \delta u - \tfrac{1}{2}\delta b_n \ \ge\ \tfrac{1}{2}\delta b_n \ >\ 0.
\]
On $[\eta,\tilde{x}-\epsilon]$, $F_1-F_0$ is bounded away from $0$ (by definition of the first crossing), so the same uniform $O_p(n^{-1/2})$ control implies $\widehat{F}^\ast_1-\widehat{F}^\ast_0>0$ there for $n$ large. Hence no bootstrap crossing occurs on $(b_n,\tilde{x}-\epsilon)$ with probability tending to one.
\end{proof}

\begin{lemma}[Uniqueness of bootstrap crossing near each true crossing]
\label{le4}
Assume Lemma~\ref{le3}. 
Let $\tilde{x}\in(0,1)$ be a crossing of $(F_0,F_1)$ such that $f_1(\tilde{x})\neq f_0(\tilde{x})$. Then there exists $\eta>0$ such that, with probability tending to one (conditionally for the bootstrap), the function $u\mapsto \widehat{F}^\ast_1(u)-\widehat{F}^\ast_0(u)$ is strictly monotone on $(\tilde{x}-\eta,\tilde{x}+\eta)$ and therefore has a unique zero in this interval. The same conclusion holds for $u\mapsto \widehat{F}_1(u)-\widehat{F}_0(u)$.
\end{lemma}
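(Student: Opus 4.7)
The plan is to combine the slope non-degeneracy of $D(u):=F_1(u)-F_0(u)$ at $\tilde{x}$ with the uniform rates for $\widehat{F}_r^\ast-F_r$ that Theorems~\ref{the2} and \ref{the3} deliver. Because the jumps of $\widehat{D}^\ast:=\widehat{F}_1^\ast-\widehat{F}_0^\ast$ at individual bootstrap atoms need not share a common sign, strict monotonicity in the pointwise sense is unavailable at the step-function level; the argument will instead yield effective monotonicity on a resolution $\lambda_n\downarrow 0$ with $\sqrt{n}\lambda_n\to\infty$, which is enough to pin down a unique sign change of $\widehat{D}^\ast$ in $(\tilde{x}-\eta,\tilde{x}+\eta)$. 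First I would localize: since $D$ is differentiable at $\tilde{x}$ with nonzero derivative $f_1(\tilde{x})-f_0(\tilde{x})$, there exist $\eta>0$ and $c>0$ with $|D(u)|\ge 2c|u-\tilde{x}|$ on $(\tilde{x}-\eta,\tilde{x}+\eta)$ and $\mathrm{sgn}\,D(u)=\mathrm{sgn}(u-\tilde{x})\cdot\mathrm{sgn}(f_1(\tilde{x})-f_0(\tilde{x}))$; WLOG this sign is positive.

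Next I would import uniform rates: Theorem~\ref{the2} gives $\sup_u|\widehat{F}_r(u)-F_r(u)|=\mathbf{O}_p(n^{-1/2})$, Theorem~\ref{the3} gives $\sup_u|\widehat{F}_r^\ast(u)-\widehat{F}_r(u)|=\mathbf{O}_p(n^{-1/2})$ conditionally on $\mathcal{X}_n$, and combining them produces a conditional envelope $M_n=\mathbf{O}_p(1)$ with $\sup_u|\widehat{D}^\ast(u)-D(u)|\le 2M_n n^{-1/2}$. The main step is then to set $\lambda_n=\alpha_n n^{-1/2}$ with $\alpha_n\to\infty$ slowly (e.g.\ $\alpha_n=\log n$): for $u\le\tilde{x}-\lambda_n$ one has $\widehat{D}^\ast(u)\le D(u)+2M_n n^{-1/2}\le -2c\lambda_n+2M_n n^{-1/2}<0$ for $n$ large, and symmetrically $\widehat{D}^\ast(u)>0$ for $u\ge\tilde{x}+\lambda_n$, each with conditional probability tending to one. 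Hence every sign change of $\widehat{D}^\ast$ in $(\tilde{x}-\eta,\tilde{x}+\eta)$ lies in the shrinking window $[\tilde{x}-\lambda_n,\tilde{x}+\lambda_n]$; defining $u^\ast:=\inf\{u\in(\tilde{x}-\eta,\tilde{x}+\eta):\widehat{D}^\ast(u)\ge 0\}$ yields a unique sign-change location with $u^\ast\to\tilde{x}$ in conditional probability. The strict monotonicity claim is to be read in this coarse sense: $\widehat{D}^\ast<0$ on $(\tilde{x}-\eta,u^\ast)$ and $\widehat{D}^\ast\ge 0$ on $[u^\ast,\tilde{x}+\eta)$, i.e., exactly one sign change. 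The identical argument applied to $\widehat{D}=\widehat{F}_1-\widehat{F}_0$ using Theorem~\ref{the2} alone handles the non-bootstrap assertion at the end of the lemma.

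The main obstacle I expect is the step-function character of $\widehat{D}^\ast$, which rules out classical pointwise strict monotonicity and forces the proof to operate at the coarser scale $\lambda_n$. A secondary subtlety is that the envelope $M_n=\mathbf{O}_p(1)$ must be genuinely conditional on $\mathcal{X}_n$; this requires extracting a uniform-in-$u$ probability bound from the tightness step of Theorem~\ref{the3} (its Cauchy--Schwarz empirical-increment control), rather than invoking only weak convergence in $\ell^\infty(\mathbb{R})$.
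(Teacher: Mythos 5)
Your route is genuinely different from the paper's. The paper argues for literal strict monotonicity: it sets $G(u)=F_1(u)-F_0(u)$, shows $|G(u)|\ge c_1|u-\tilde{x}|$ and strict monotonicity of $G$ on a fixed neighborhood, writes $\widehat{G}^\ast=G+R_n^\ast$ with $\sup_u|R_n^\ast(u)|=\mathbf{O}_p(n^{-1/2})$, and then claims that for every pair $u_1<u_2$ the increment $G(u_2)-G(u_1)\ge c_1(u_2-u_1)$ dominates $R_n^\ast(u_2)-R_n^\ast(u_1)$, whence $\widehat{G}^\ast$ is strictly monotone with a unique zero. You instead prove only a sign dichotomy outside a shrinking window $[\tilde{x}-\lambda_n,\tilde{x}+\lambda_n]$ with $\lambda_n=n^{-1/2}\log n$. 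Your observation that $\widehat{D}^\ast$ is a step function whose jumps carry both signs, so pointwise strict monotonicity cannot hold, is a fair criticism of the lemma as stated; the paper's domination step is indeed unjustified for pairs with $u_2-u_1=\mathbf{O}(n^{-1/2})$, since the increment of $R_n^\ast$ is not $o(u_2-u_1)$ uniformly at that scale.

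However, your replacement does not close the argument. Confining all sign changes to $[\tilde{x}-\lambda_n,\tilde{x}+\lambda_n]$ says nothing about how many occur inside that window, so your concluding assertion that $\widehat{D}^\ast\ge 0$ on $[u^\ast,\tilde{x}+\eta)$ --- i.e., exactly one sign change --- does not follow from the envelope bound $\sup_u|\widehat{D}^\ast(u)-D(u)|\le 2M_n n^{-1/2}$. Moreover, the resolution $\lambda_n$ is too coarse for the downstream use in Theorem~\ref{the6}: there $\widehat{\gamma}^\ast$ must agree with $\widehat{x}_1^\ast$ up to $\mathbf{o}_p(n^{-1/2})$, whereas your window only controls the ambiguously signed region up to Lebesgue measure $2\lambda_n=2n^{-1/2}\log n$, and $\sqrt{n}\,\lambda_n\to\infty$. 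The missing ingredient is a local oscillation bound: by the asymptotic equicontinuity implicit in the tightness steps of Theorems~\ref{the2}--\ref{the3}, $\sup_{|u-\tilde{x}|\le\lambda_n}\sqrt{n}\,\bigl|(\widehat{D}^\ast-D)(u)-(\widehat{D}^\ast-D)(\tilde{x})\bigr|=\mathbf{o}_p(1)$, so that inside the window $\widehat{D}^\ast(u)=D(u)+Z_n+\mathbf{o}_p(n^{-1/2})$ uniformly with $Z_n=\mathbf{O}_p(n^{-1/2})$ constant in $u$. Combined with the linear growth $|D(u)|\ge 2c|u-\tilde{x}|$, this pins the negativity set down to an interval up to a set of measure $\mathbf{o}_p(n^{-1/2})$, which is what Theorem~\ref{the6} actually requires; the uniform envelope alone does not deliver it. The same repair is needed for the paper's own domination step.
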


\begin{proof}
Let $G(u):=F_1(u)-F_0(u)$. By assumption, $G(\tilde{x})=0$ and $G'(\tilde{x})=f_1(\tilde{x})-f_0(\tilde{x})\neq 0$. By continuity of $G'$, there exist $\eta>0$ and $c>0$ such that either $G'(u)\ge c>0$ for all $u\in(\tilde{x}-\eta,\tilde{x}+\eta)$ or $G'(u)\le -c<0$ there. Hence $G$ is strictly monotone on $(\tilde{x}-\eta,\tilde{x}+\eta)$ and has a unique zero at $u=\tilde{x}$.

Write the bootstrap perturbation as
\[
\widehat{G}^\ast(u):=\widehat{F}^\ast_1(u)-\widehat{F}^\ast_0(u)
= G(u)+R_n^\ast(u),
\qquad \sup_{u\in[0,1]}|R_n^\ast(u)|=\mathbf{O}_p(n^{-1/2}).
\]
Shrink $\eta$ if necessary so that $|G(u)|\ge c_1|u-\tilde{x}|$ on $(\tilde{x}-\eta,\tilde{x}+\eta)$ for some $c_1>0$ (mean value theorem with $G'$ bounded away from $0$). For $n$ large, with probability tending to one we have $\sup_{|u-\tilde{x}|\le \eta}|R_n^\ast(u)|\le \tfrac{1}{2}c_1\eta$. Then, for $u_1<u_2$ in $(\tilde{x}-\eta,\tilde{x}+\eta)$,
\[
\widehat{G}^\ast(u_2)-\widehat{G}^\ast(u_1)
=\{G(u_2)-G(u_1)\}+\{R_n^\ast(u_2)-R_n^\ast(u_1)\}.
\]
Since $G$ is strictly monotone, $G(u_2)-G(u_1)$ has fixed sign and magnitude at least $c_1(u_2-u_1)$, which dominates the $\mathbf{o}_p(1)$ perturbation $R_n^\ast(u_2)-R_n^\ast(u_1)$. Thus $\widehat{G}^\ast$ is strictly monotone on $(\tilde{x}-\eta,\tilde{x}+\eta)$ for $n$ large, and it has exactly one zero there. The same argument (without conditional probability) applies to $\widehat{G}(u):=\widehat{F}_1(u)-\widehat{F}_0(u)$.
\end{proof}

\medskip
Define the bootstrap estimator
\[
\widehat{\gamma}^{\ast}=\gamma(\widehat{F}_0^{\ast},\widehat{F}_1^{\ast})
=\mu\{\,p\in(0,1):\widehat{Q}_0^{\ast}(p)>\widehat{Q}_1^{\ast}(p)\,\},
\]
where $\widehat{F}_r^\ast,\widehat{Q}_r^\ast$ are the DRM bootstrap counterparts.

\begin{theorem}
\label{the6}
Under the conditions of Lemma \ref{le2},
\begin{equation}\label{gamma_consis}
\sup_{x}\Big| \PP\!\big(\sqrt{n}(\widehat{\gamma}^{\ast}-\widehat{\gamma})\le x \,\big|\, {\cal X}_n\big)- \PP(Z_{\gamma}\le x)\Big|=\mathbf{o}_p(1),
\end{equation}
where $Z_{\gamma}\sim N(0, \sigma^2)$ and $\sigma^2$ is as in Lemma \ref{le2}.
\end{theorem}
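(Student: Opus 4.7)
The plan is to localize $\widehat{\gamma}^{\ast}-\widehat{\gamma}$ to a finite sum over the true crossings and then linearize each shift via Theorem~\ref{the3}. Working under the probability integral transform of Lemma~\ref{le3}, set $D:=F_1-F_0$, $\widehat{G}:=\widehat{F}_1-\widehat{F}_0$, and $\widehat{G}^{\ast}:=\widehat{F}_1^{\ast}-\widehat{F}_0^{\ast}$, so that $\tilde{\gamma}=\mu\{D>0\}$, $\widehat{\gamma}=\mu\{\widehat{G}>0\}$, and $\widehat{\gamma}^{\ast}=\mu\{\widehat{G}^{\ast}>0\}$. Let $0<\tilde{x}_1<\cdots<\tilde{x}_K<1$ be the true crossings, with signs $\epsilon_j\in\{-1,+1\}$ encoding the direction of the sign change of $D$ at $\tilde{x}_j$. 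By Lemma~\ref{le4}, near each $\tilde{x}_j$ both $\widehat{G}$ and $\widehat{G}^{\ast}$ possess a unique zero $\widehat{x}_j$ and $\widehat{x}_j^{\ast}$; by Lemma~\ref{le3} no additional zeros occur in $[b_n,1-b_n]$, all with conditional probability tending to one. Hence
\[
\widehat{\gamma}^{\ast}-\widehat{\gamma}=\sum_{j=1}^K\epsilon_j\,(\widehat{x}_j^{\ast}-\widehat{x}_j)+R_n^{\mathrm{end}},
\]
where $R_n^{\mathrm{end}}$ is the discrepancy contributed by the boundary strips $(0,b_n)\cup(1-b_n,1)$.

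To linearize each $\widehat{x}_j^{\ast}-\widehat{x}_j$, combine $0=\widehat{G}(\widehat{x}_j)=\widehat{G}^{\ast}(\widehat{x}_j^{\ast})$ with a Taylor expansion of the smooth $G$ at $\tilde{x}_j$. Since $G'(\tilde{x}_j)=f_1(\tilde{x}_j)-f_0(\tilde{x}_j)\neq 0$ and $\sup_u|\widehat{G}^{\ast}(u)-G(u)|=\mathbf{O}_p(n^{-1/2})$ (from Theorems~\ref{the2}--\ref{the3}), standard root-perturbation gives $\widehat{x}_j-\tilde{x}_j=\mathbf{O}_p(n^{-1/2})$ and, conditionally, $\widehat{x}_j^{\ast}-\tilde{x}_j=\mathbf{O}_p(n^{-1/2})$. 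Stochastic equicontinuity of $\sqrt{n}(\widehat{F}_r^{\ast}-\widehat{F}_r)$, granted by the tightness established in Theorem~\ref{the3}, allows us to replace $\widehat{x}_j$ by $\tilde{x}_j$ inside the empirical increment, producing the Bahadur-type representation
\[
\widehat{x}_j^{\ast}-\widehat{x}_j=-\,\frac{\{(\widehat{F}_1^{\ast}-\widehat{F}_1)-(\widehat{F}_0^{\ast}-\widehat{F}_0)\}(\tilde{x}_j)}{f_1(\tilde{x}_j)-f_0(\tilde{x}_j)}+\mathbf{o}_p(n^{-1/2}).
\]

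Multiplying by $\sqrt{n}$ and summing, Theorem~\ref{the3} yields conditional joint convergence of the finite-dimensional vector $\{\sqrt{n}(\widehat{F}_r^{\ast}-\widehat{F}_r)(\tilde{x}_j)\}_{r,j}$ to the same centered Gaussian law as that of its population analogue $\{\sqrt{n}(\widehat{F}_r-F_r)(\tilde{x}_j)\}_{r,j}$. Running the identical linearization unconditionally on $\sqrt{n}(\widehat{\gamma}-\tilde{\gamma})$ and matching with Lemma~\ref{le2} identifies the coefficients so that the resulting limit is exactly $Z_\gamma\sim N(0,\sigma^2)$. Uniform convergence of CDFs as in \eqref{gamma_consis} then follows from weak convergence to a continuous distribution.

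The main technical obstacle is the boundary control $\sqrt{n}\,R_n^{\mathrm{end}}=\mathbf{o}_p(1)$, because Lemma~\ref{le3} leaves open the possibility of spurious bootstrap crossings in the strip $(0,b_n)$ of width $b_n=n^{-1/2}\log n$, which naively inflates $\sqrt{n}\,R_n^{\mathrm{end}}$ by a factor of $\log n$. By \eqref{as2}, $|D(u)|\ge cu$ near $u=0$, so the symmetric difference of $\{\widehat{G}>0\}$ and $\{\widehat{G}^{\ast}>0\}$ on $(0,b_n)$ is contained in $\{u:|\widehat{G}(u)|\le\|\widehat{G}^{\ast}-\widehat{G}\|_\infty\}$. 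The crucial observation is that $\mathcal{G}_r(0)=0$; direct inspection of \eqref{omega_r} shows $\omega_r(u,u)=O(u)$ as $u\to 0$, so a conditional modulus-of-continuity argument gives $|\widehat{G}^{\ast}(u)-\widehat{G}(u)|=\mathbf{O}_p(n^{-1/2}u^{1/2})$ near the endpoint. Solving this against $|\widehat{G}(u)|\ge c' u$ (uniformly valid for $n$ large) confines the symmetric difference to $(0,C/n)$, of Lebesgue measure $\mathbf{O}_p(n^{-1})$, so $\sqrt{n}\,R_n^{\mathrm{end}}=\mathbf{o}_p(1)$. A symmetric argument at $u=1$ completes the endpoint analysis.
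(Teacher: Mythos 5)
There is a genuine gap at the very first step, in the identification $\widehat{\gamma}=\mu\{\widehat{G}>0\}$ with $\widehat{G}=\widehat{F}_1-\widehat{F}_0$ (and likewise for $\widehat{\gamma}^\ast$). The dominance index is a Lebesgue measure in $p$-space, $\gamma(F,G)=\mu\{p:Q_F(p)>Q_G(p)\}$, and in general this does \emph{not} equal the $x$-space measure $\mu\{x:G(x)>F(x)\}$ (e.g.\ for $F$ uniform on $(0,2)$ and $G$ uniform on $(0,1)$ the two measures are $1$ and $2$). The PIT makes the two coincide for the \emph{population} pair only because $F_0$ becomes the identity; the estimated $\widehat{F}_0$ is not the identity. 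Concretely, in the one-crossing case $\widehat{\gamma}$ equals (up to boundary terms) the $p$-level $\widehat{p}_1=\widehat{F}_0(\widehat{x}_1)$ at which $\widehat{Q}_0$ and $\widehat{Q}_1$ cross, not the $x$-location $\widehat{x}_1$ of the crossing of $\widehat{F}_0$ and $\widehat{F}_1$. The discrepancy $\widehat{p}_1-\widehat{x}_1=(\widehat{F}_0-F_0)(\widehat{x}_1)$ is exactly $\mathbf{O}_p(n^{-1/2})$ and nondegenerate, so your linearization of $\sqrt{n}(\widehat{\gamma}^\ast-\widehat{\gamma})$ omits the term $\sqrt{n}(\widehat{F}_0^\ast-\widehat{F}_0)(\tilde{x}_j)$ at each crossing. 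The correct influence function at a crossing (under PIT) is $\{f_1(\widehat{F}_0-F_0)-(\widehat{F}_1-F_1)\}/(f_1-1)$ rather than your $\{(\widehat{F}_0-F_0)-(\widehat{F}_1-F_1)\}/(f_1-1)$; these differ by $(\widehat{F}_0-F_0)(\tilde{x}_j)$ and yield different variances. Consequently the "matching with Lemma~\ref{le2}" step cannot go through: running your linearization unconditionally on $\sqrt{n}(\widehat{\gamma}-\tilde{\gamma})$ does not reproduce $\sigma^2$. The paper avoids this trap by staying in $p$-space throughout: it characterizes each estimated crossing as the argmax of the integrated quantile contrast $\widehat{\Phi}_n(u)=\int_{\tilde{x}_1-\epsilon}^{u}\{\widehat{Q}_0(t)-\widehat{Q}_1(t)\}\,dt$ and linearizes via Theorem~\ref{the4} and an argmax (Hadamard-differentiability) theorem, so the $\widehat{F}_0$-fluctuation is automatically carried by the quantile process $W_n$.

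The rest of your architecture is sound and close to the paper's (PIT reduction, localization via Lemmas~\ref{le3}--\ref{le4}, crossing-by-crossing linearization, conditional finite-dimensional convergence). Your endpoint analysis is actually \emph{sharper} than the paper's: solving the local modulus $|\widehat{G}^\ast(u)-\widehat{G}(u)|=\mathbf{O}_p(n^{-1/2}u^{1/2})$ against $|G(u)|\ge cu$ confines the boundary symmetric difference to a set of measure $\mathbf{O}_p(n^{-1})$, whereas the paper's Case~1 bound of $\mathbf{o}_p(n^{-1/2})$ from strips of width $b_n=n^{-1/2}\log n$ is stated without this refinement (and would need it, since $\sqrt{n}\,b_n=\log n\to\infty$). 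To repair your proof, replace the $x$-space root perturbation by a perturbation of the crossing \emph{level}: write $\widehat{p}_j^\ast-\widehat{p}_j$ in terms of $(\widehat{F}_r^\ast-\widehat{F}_r)(\tilde{x}_j)$ for $r=0,1$ (which Theorem~\ref{the3} controls jointly), or adopt the paper's quantile-process argmax route.
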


\begin{proof}
\emph{Roadmap.} We (i) reduce by the PIT normalization used above, (ii) localize bootstrap crossings to the boundary strips or small neighborhoods of the true crossings (Lemmas~\ref{le3}--\ref{le4}), (iii) linearize crossing locations via an argmax expansion driven by the quantile processes (Theorem~\ref{the4}), and (iv) conclude by conditional weak convergence of the bootstrap process.

\medskip
\noindent\emph{Reduction via PIT.}
As above, work on $[0,1]$ with $F_0(u)=u$ and $f_1(0)\neq 1\neq f_1(1)$.

\medskip
\noindent\emph{Localization of bootstrap crossings.}
Let $b_n:=n^{-1/2}\log n$ as in Lemma~\ref{le3}. The lemmas confine any bootstrap crossings of $\widehat F_0^\ast$ and $\widehat F_1^\ast$ to $(0,b_n)\cup(1-b_n,1)$ or to small neighborhoods of the true crossings, and ensure uniqueness there.

\medskip
\noindent\emph{Case 1: no crossings.}
If $F_0$ and $F_1$ never cross, then $\tilde{\gamma}\in\{0,1\}$. By Lemma~\ref{le3}, any bootstrap crossing can only occur in boundary strips of length $O(b_n)$, so $\widehat{\gamma}^\ast=\mathbf{o}_p(n^{-1/2})$ or $1-\mathbf{o}_p(n^{-1/2})$. The same holds for $\widehat{\gamma}$ (cf. the supplement of \citealp{zhuang}). Thus
\[
n^{1/2}(\widehat{\gamma}^\ast-\widehat{\gamma})=\mathbf{o}_p(1),
\]
and \eqref{gamma_consis} is immediate.

\medskip
\noindent\emph{Case 2: one crossing.}
Suppose $F_0$ and $F_1$ cross once at $\tilde{x}_1\in(0,1)$, so $Q_0(t)>Q_1(t)$ iff $t<\tilde{x}_1$. By Lemma~\ref{le4} (and the analogous result for $\widehat{F}_r$), there exist unique $\widehat{x}_1,\widehat{x}_1^\ast\in(b_n,1-b_n)$ with
\[
\widehat{\gamma}=\widehat{x}_1+\mathbf{o}_p(n^{-1/2}),\qquad
\widehat{\gamma}^\ast=\widehat{x}_1^\ast+\mathbf{o}_p(n^{-1/2}).
\]
For small $\epsilon>0$, define the integrated contrasts
\[
\Phi(u)=\int_{\tilde{x}_1-\epsilon}^{u}\{Q_0(t)-Q_1(t)\}\,dt,\quad
\widehat{\Phi}_n(u)=\int_{\tilde{x}_1-\epsilon}^{u}\{\widehat{Q}_0(t)-\widehat{Q}_1(t)\}\,dt,
\]
\[
\widehat{\Phi}_n^\ast(u)=\int_{\tilde{x}_1-\epsilon}^{u}\{\widehat{Q}_0^\ast(t)-\widehat{Q}_1^\ast(t)\}\,dt.
\]
By Theorem~\ref{the4}, the centered processes
\[
W_n(t)=n^{1/2}\!\Big[(\widehat Q_0-\widehat Q_1)(t)-(Q_0-Q_1)(t)\Big],\quad
W_n^\ast(t)=n^{1/2}\!\Big[(\widehat Q_0^\ast-\widehat Q_1^\ast)(t)-(\widehat Q_0-\widehat Q_1)(t)\Big]
\]
converge weakly (conditionally for $W_n^\ast$) in $L_1[0,1]$ to the same Gaussian limit. Hence
\[
\Delta_n(u):=\widehat{\Phi}_n(u)-\Phi(u)=n^{-1/2}\!\int_{\tilde{x}_1-\epsilon}^{u}W_n(t)\,dt,\quad
\Delta_n^\ast(u):=\widehat{\Phi}_n^\ast(u)-\widehat{\Phi}_n(u)=n^{-1/2}\!\int_{\tilde{x}_1-\epsilon}^{u}W_n^\ast(t)\,dt.
\]
Because $\Phi$ has a unique maximizer at $\tilde{x}_1$ with negative second derivative there
($f_0(Q_0(\tilde{x}_1))\neq f_1(Q_1(\tilde{x}_1))$), the argmax map is Hadamard differentiable. By Theorem A.2 of \citet{alvarez2017models},
\[
n^{1/2}(\widehat{x}_1-\tilde{x}_1)=-C\,W_n(\tilde{x}_1)+\mathbf{o}_p(1),\qquad
n^{1/2}(\widehat{x}_1^\ast-\widehat{x}_1)=-C\,\{W_n^\ast(\tilde{x}_1)-W_n(\tilde{x}_1)\}+\mathbf{o}_p(1),
\]
with
\[
C=\frac{f_0(Q_0(\tilde{x}_1))\,f_1(Q_1(\tilde{x}_1))}{\,f_0(Q_0(\tilde{x}_1))-f_1(Q_1(\tilde{x}_1))\,}.
\]
Therefore,
\[
n^{1/2}(\widehat{\gamma}^\ast-\widehat{\gamma})
=-C\big\{W_n^\ast(\tilde{x}_1)-W_n(\tilde{x}_1)\big\}+\mathbf{o}_p(1).
\]
By Theorem~\ref{the4} (bootstrap convergence of the quantile process) together with 
Theorem~\ref{the1} (bootstrap consistency for the parameter estimation embedded in 
$\widehat Q_r^\ast$), the conditional limit of $W_n^\ast(\tilde{x}_1)-W_n(\tilde{x}_1)$ 
equals the (unconditional) limit of $W_n(\tilde{x}_1)$. 
Hence $n^{1/2}(\widehat{\gamma}^\ast-\widehat{\gamma})\rightsquigarrow N(0,\sigma^2)$ 
conditionally on ${\cal X}_n$, where $\sigma^2=C^2\var\{W(\tilde{x}_1)\}$ matches Lemma~\ref{le2}.

\medskip
\noindent\emph{Case 3: finitely many crossings.}
Let the crossings be $\tilde{x}_1,\ldots,\tilde{x}_K$; then
\[
\tilde{\gamma}=\sum_{j=1}^K s_j \tilde{x}_j,\qquad s_j\in\{+1,-1\},
\]
with $s_j$ determined by the local sign of $Q_0-Q_1$. 
By Lemmas~\ref{le3}--\ref{le4}, there are unique 
$\widehat{x}_j,\widehat{x}_j^\ast$ near each $\tilde{x}_j$, and
\[
\widehat{\gamma}=\sum_{j=1}^K s_j \widehat{x}_j+\mathbf{o}_p(n^{-1/2}),\qquad
\widehat{\gamma}^\ast=\sum_{j=1}^K s_j \widehat{x}_j^\ast+\mathbf{o}_p(n^{-1/2}).
\]
Applying the same argmax linearization uniformly over $j$ yields
\[
n^{1/2}(\widehat{x}_j^\ast-\widehat{x}_j)=-C_j\{W_n^\ast(\tilde{x}_j)-W_n(\tilde{x}_j)\}+\mathbf{o}_p(1),
\]
with $C_j$ defined as above at $\tilde{x}_j$. Summing over $j$,
\[
n^{1/2}(\widehat{\gamma}^\ast-\widehat{\gamma})
=-\sum_{j=1}^K s_j C_j\{W_n^\ast(\tilde{x}_j)-W_n(\tilde{x}_j)\}+\mathbf{o}_p(1).
\]
By joint (bootstrap) weak convergence of the quantile process at finitely many points (Theorem~\ref{the4}), the conditional limit coincides with the Gaussian limit of $\sum_{j=1}^K s_j C_j W(\tilde{x}_j)$, giving \eqref{gamma_consis}.
\end{proof}

\begin{remark}
Theorem~\ref{the6} confirms that the bootstrap reproduces the sampling law of $\widehat{\gamma}$ without explicit estimation of the intricate variance in Lemma~\ref{le2}. In practice, percentile or basic bootstrap intervals for $\gamma(F_0,F_1)$ are therefore asymptotically valid, and the approach aligns seamlessly with the process-level results established in Theorems~\ref{the2}--\ref{the4}.
\end{remark}

\section{Simulation}
\label{se5}
We assess the finite-sample performance of the bootstrap procedures developed in Theorems~\ref{the2}--\ref{the4}. Specifically, we examine the empirical coverage of percentile bootstrap confidence intervals (CIs) for (i) the DRM parameter vector $\bm{\theta}$, (ii) the distribution value $F_r(x)$ at selected $x$, and (iii) the quantile $Q_r(p) = F_r^{-1}(p)$ at selected $p$, targeting the nominal level $1-\alpha=0.95$.

\medskip
\noindent\textbf{Generic target and bootstrap CI.}\ Let $\xi$ be any \emph{scalar functional} of $(F_0,\ldots,F_m)$--for example, a component $\theta_{rs}$ of $\bm{\theta}$, a value $F_r(x)$ at a fixed $x$, or a quantile $F_r^{-1}(p)$ at a fixed $p$. Denote its true value by $\widetilde{\xi}$, the DRM estimator by $\widehat{\xi}$, and the $b$th bootstrap replicate by $\widehat{\xi}^{\ast b}$. By the bootstrap consistency established earlier, the laws of $\widehat{\xi}-\widetilde{\xi}$ and $\widehat{\xi}^{\ast}-\widehat{\xi}$ share the same limit. If $q_\alpha$ denotes the empirical $\alpha$--quantile of $\{\widehat{\xi}^{\ast b}-\widehat{\xi}: b=1,\ldots,B\}$, then
\[
\PP\!\big(\widehat{\xi}-\widetilde{\xi}\le q_\alpha\big)\approx \alpha.
\]
This yields the standard \emph{percentile} CI for $\xi$:
\begin{equation}
\label{boot.CI}
\mathrm{CI}_{1-\alpha}(\xi)
=\Big[\ \widehat{\xi}-q_{1-\alpha/2}\,,\ \widehat{\xi}-q_{\alpha/2}\ \Big].
\end{equation}

\medskip
\noindent\textbf{Design, resampling and evaluation.}\ In each scenario we generate i.i.d.\ observations $\{x_{rj}\}_{j=1}^{n_r}\sim F_r$ for groups $r=0,\ldots,m$, collected as $\mathcal{X}_n=\{x_{rj}\}$. For each Monte Carlo run, we compute $\widehat{\xi}$ for the following targets
\begin{itemize}\setlength{\itemsep}{1pt}
\item DRM parameters $\theta_{rs}$ with $r=1,\ldots,m$ and $s=1,\ldots,d$;
\item distribution values $F_r(x)$ at $x= F_0^{-1}(p):  p=0.1, \ldots, 0.9 $;
\item quantiles $Q_r(p) = F_r^{-1}(p)$ at $p = 0.1, \ldots, 0.9$.
\end{itemize}
To approximate sampling variability, we perform $B=999$ bootstrap resamples by drawing with replacement within each group $r$ (preserving $\{n_r\}$), recompute $\widehat{\xi}^{\ast b}$, and form the endpoints in \eqref{boot.CI}. Empirical coverage rates are based on $N=2000$ Monte Carlo runs.

\medskip
\noindent\textbf{Scenarios and DRM bases.}\ We consider two data--generating settings chosen to probe distinct shapes and tail behaviors.
\begin{enumerate}\setlength{\itemsep}{1pt}
\item 
\emph{Gamma family ($m=4$):} 
$F_r\in\{\Gamma(5,1.5),\Gamma(5,1.4),\Gamma(6,1.3),\Gamma(6,1.2),\Gamma(7,1.1)\}$ with sample sizes $(500,450,550,650,675)$; 
DRM basis $\mathbf{q}(x)=(1,\,x,\,\log x)^\T$ ($d=3$).
\item 
\emph{Normal family ($m=6$):} 
$F_r \in\{ N(11,1),N(11.5,2),N(12,3),N(12.5,4),N(13,5)$,\\
$N(13.5,6),N(14,7)\}$ with sample sizes $(300,320,340,330,350,370,400)$; 
DRM basis $\mathbf{q}(x)=(1,\,x,\,x^2)^\T$ ($d=3$).
\end{enumerate}
Here $N(\mu,\sigma^2)$ has mean $\mu$ and variance $\sigma^2$, and $\Gamma(\alpha,\beta)$ has shape $\alpha$ and scale $\beta$.

\medskip
\noindent\textbf{Findings.}\ The simulation results in given in Tables \ref{tab1}, \ref{tab2} and \ref{tab3}. Across both scenarios, the bootstrap CIs for the \emph{model parameters} $\theta_{rs}$ and the \emph{distribution values} $F_r(x)$ attain empirical coverage close to $0.95$, typically within $\pm\,0.02$. For \emph{quantiles} $Q_r(p)$, coverage is also near nominal with mild undercoverage at tail levels (e.g., $p\in\{0.1,0.9\}$ in the Normal setting), a familiar finite-sample feature of quantile inference. Overall, these results accord with Theorems~\ref{the3}--\ref{the4}: the percentile bootstrap delivers accurate uncertainty quantification for DRM-based estimators over the considered designs.

\begin{table}[htbp]
\centering
\caption{Empirical coverage of percentile bootstrap CIs for $\bm{\theta}$}
\label{tab1}
\renewcommand{\arraystretch}{1.1}
\setlength{\tabcolsep}{5pt}

\begin{minipage}[t]{0.48\linewidth}
\centering
\subcaption*{Scenario I: Gamma ($d=3$)}
\begin{tabular}{lcccc}
\toprule
 & $F_1$ & $F_2$ & $F_3$ & $F_4$ \\
\midrule
$\theta_{r1}$ & .956 & .945 & .948 & .942 \\
$\theta_{r2}$ & .953 & .948 & .946 & .938 \\
$\theta_{r3}$ & .957 & .944 & .952 & .941 \\
\bottomrule
\end{tabular}
\end{minipage}
\hfill
\begin{minipage}[t]{0.48\linewidth}
\centering
\subcaption*{Scenario II: Normal ($d=3$)}
\begin{tabular}{lcccccc}
\toprule
 & $F_1$ & $F_2$ & $F_3$ & $F_4$ & $F_5$ & $F_6$ \\
\midrule
$\theta_{r1}$ & .959 & .952 & .945 & .941 & .943 & .944 \\
$\theta_{r2}$ & .957 & .953 & .943 & .948 & .950 & .949 \\
$\theta_{r3}$ & .955 & .951 & .942 & .947 & .954 & .949 \\
\bottomrule
\end{tabular}
\end{minipage}
\end{table}

\begin{table*}[h]
\centering
\caption{Empirical coverage of percentile bootstrap CIs for $F_r(x)$}
\label{tab2}
\renewcommand{\arraystretch}{1.1}
\setlength{\tabcolsep}{4pt}

\begin{minipage}[t]{0.48\textwidth}
\centering
\subcaption*{Scenario I: Gamma ($d=3$)}
\begin{tabular}{lccccc}
\toprule
 & $F_0$ & $F_1$ & $F_2$ & $F_3$ & $F_4$ \\
\midrule
$x= Q_0(0.1)$ & .938 & .941 & .916 & .917 & .944 \\
$x= Q_0(0.2)$ & .939 & .951 & .933 & .929 & .915 \\
$x= Q_0(0.3)$ & .949 & .948 & .932 & .934 & .927 \\
$x= Q_0(0.4)$ & .946 & .952 & .935 & .940 & .927 \\
$x= Q_0(0.5)$ & .950 & .954 & .940 & .945 & .936 \\
$x= Q_0(0.6)$ & .942 & .949 & .938 & .943 & .953 \\
$x= Q_0(0.7)$ & .941 & .946 & .944 & .947 & .948 \\
$x= Q_0(0.8)$ & .946 & .940 & .947 & .944 & .951 \\
$x= Q_0(0.9)$ & .943 & .936 & .951 & .953 & .947 \\
\bottomrule
\end{tabular}
\end{minipage}
\hfill
\begin{minipage}[t]{0.48\textwidth}
\centering
\subcaption*{Scenario II: Normal ($d=3$)}
\begin{tabular}{ccccccc}
\toprule
  $F_0$ & $F_1$ & $F_2$ & $F_3$ & $F_4$ & $F_5$ & $F_6$ \\
\midrule
 .926 & .935 & .938 & .943 & .932 & .933 & .922 \\
 .934 & .938 & .943 & .945 & .941 & .936 & .931 \\
 .936 & .945 & .942 & .948 & .944 & .943 & .939 \\
 .937 & .944 & .945 & .945 & .946 & .945 & .944 \\
 .942 & .945 & .951 & .951 & .949 & .952 & .950 \\
 .944 & .946 & .952 & .945 & .946 & .953 & .954 \\
 .941 & .949 & .950 & .948 & .947 & .954 & .953 \\
 .942 & .946 & .949 & .948 & .944 & .955 & .950 \\
 .940 & .942 & .945 & .946 & .944 & .958 & .954 \\
\bottomrule
\end{tabular}
\end{minipage}
\end{table*}
\begin{table*}[h]
\centering
\caption{Empirical coverage of percentile bootstrap CIs for $Q_r(p)$}
\label{tab3}
\renewcommand{\arraystretch}{1.1}
\setlength{\tabcolsep}{4pt}

\begin{minipage}[t]{0.48\textwidth}
\centering
\subcaption*{Scenario I: Gamma ($d=3$)}
\begin{tabular}{lccccc}
\toprule
 & $F_0$ & $F_1$ & $F_2$ & $F_3$ & $F_4$ \\
\midrule
$p=0.1$ & .925 & .923 & .921 & .929 & .945 \\
$p=0.2$ & .918 & .931 & .931 & .938 & .937 \\
$p=0.3$ & .935 & .933 & .934 & .930 & .937 \\
$p=0.4$ & .935 & .937 & .941 & .929 & .936 \\
$p=0.5$ & .936 & .937 & .938 & .933 & .924 \\
$p=0.6$ & .934 & .939 & .932 & .938 & .930 \\
$p=0.7$ & .933 & .939 & .935 & .933 & .922 \\
$p=0.8$ & .939 & .942 & .929 & .931 & .919 \\
$p=0.9$ & .939 & .928 & .941 & .933 & .906 \\
\bottomrule
\end{tabular}
\end{minipage}
\hfill
\begin{minipage}[t]{0.48\textwidth}
\centering
\subcaption*{Scenario II: Normal ($d=3$)}
\begin{tabular}{ccccccc}
\toprule
 $F_0$ & $F_1$ & $F_2$ & $F_3$ & $F_4$ & $F_5$ & $F_6$ \\
\midrule
  .904 & .924 & .934 & .941 & .941 & .944 & .945 \\
  .926 & .930 & .935 & .941 & .945 & .951 & .951 \\
  .918 & .930 & .939 & .943 & .939 & .949 & .946 \\
  .922 & .939 & .938 & .941 & .942 & .948 & .946 \\
  .928 & .941 & .938 & .932 & .935 & .946 & .942 \\
  .929 & .936 & .935 & .939 & .940 & .939 & .938 \\
  .937 & .935 & .938 & .936 & .933 & .937 & .940 \\
  .933 & .924 & .935 & .932 & .926 & .933 & .929 \\
  .935 & .933 & .933 & .924 & .934 & .926 & .914 \\
\bottomrule
\end{tabular}
\end{minipage}
\end{table*}

\clearpage
\section{Real-data analysis}
\label{se6}
We illustrate resampling-based inference under the DRM using per capita income data of
year 2020 from the China Family Panel Studies (CFPS)\footnotemark. The CFPS is a nationally representative, comprehensive longitudinal social survey designed to collect data at the community, family, and individual levels. Its objective is to reflect the social, economic, demographic, educational, and health transformations in China. 
The baseline survey covers 25 provinces/municipalities/autonomous regions, representing $95\%$ of the nation's population. Within the dataset, per capita income is derived as a composite variable, calculated by dividing the total household gross (or net) income by the household size. Household income itself is an aggregate measure comprising wage income, operating income, property income, transfer income, and other incomes, while household size is defined as the number of co-resident members.\footnotetext{Data source: China Family Panel Studies (CFPS, 2020). See \cite{CFPS2020} for details.}

We analyze data from six provinces: Henan ($n=1160$), Hubei ($n=163$), Hunan ($n=299$), Fujian ($n=166$), Anhui ($n=223$), and Sichuan ($n=510$). These provinces were chosen because they have comparable levels of economic development while also offering geographical diversity covering Central, East, and Southwest China which provides an ideal case for examining the general applicability of our method. Each province is treated as a distinct population.
Figure~\ref{fig:ecdf} and Table~\ref{tab:desc_stats} present the empirical distribution functions and the descriptive statistics for the six provinces, respectively. A key advantage of employing the DRM in this context is its ability to handle significant disparities in sample sizes across populations. As noted, our sample sizes range from $n=163$ for Hubei to $n=1160$ for Henan. A direct, non-parametric approach would yield estimates (e.g., for poverty rates or medians) with widely varying precision, making cross-province comparisons unreliable. The DRM framework mitigates this issue by jointly estimating the distributions, effectively ``borrowing strength" from the larger samples to improve the stability and precision of estimates for the smaller ones. By leveraging the information from all six provinces simultaneously under the model's structural assumption, we obtain more comparable and efficient inferences for each individual population.

\begin{figure}[htbp]
    \centering
    \includegraphics[width=0.6\linewidth]{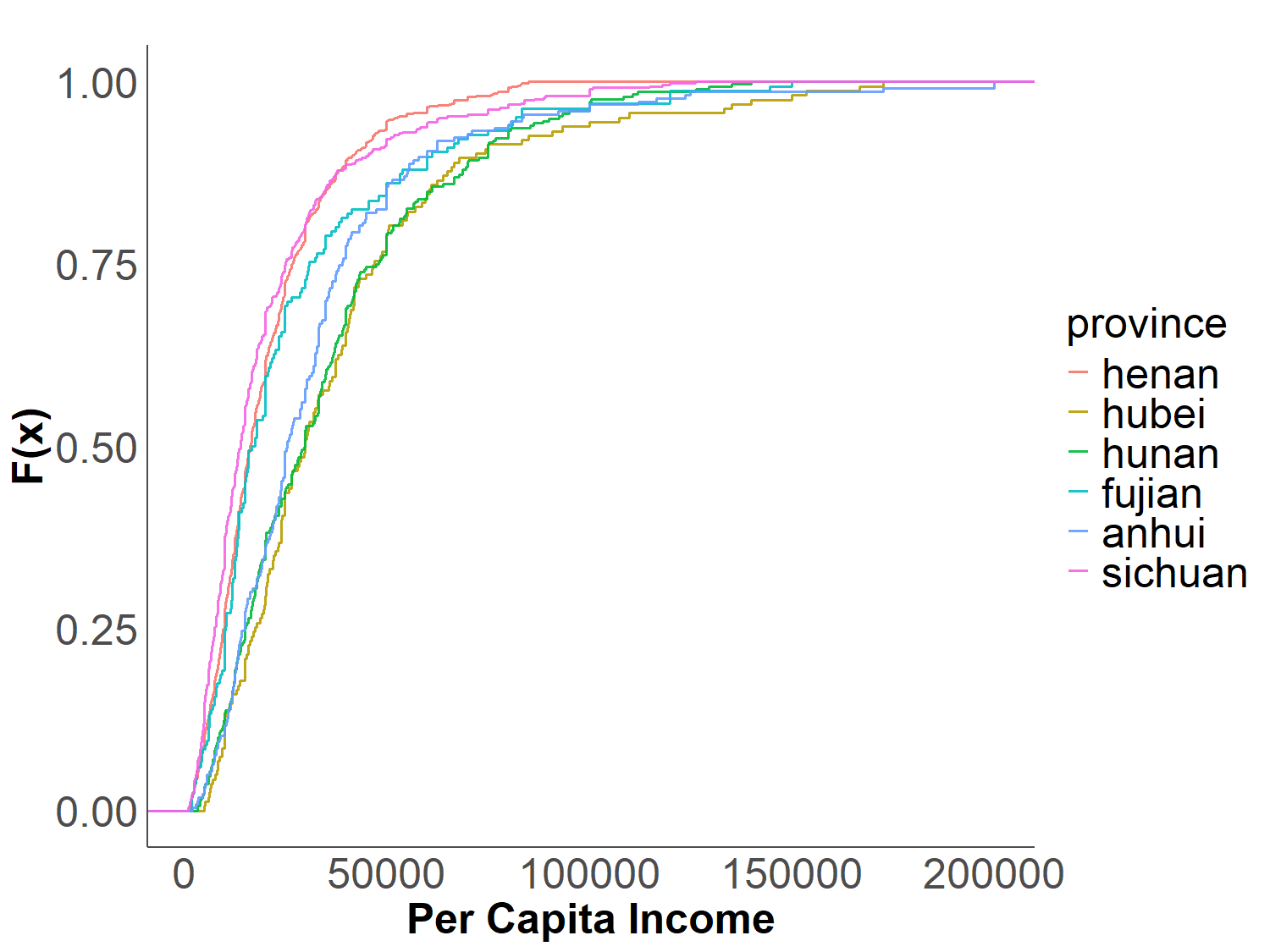}
    \caption{Empirical distribution functions for the six provinces}
    \label{fig:ecdf}
\end{figure}

\begin{table}[htbp]
\centering
\caption{Descriptive statistics  for the six provinces.}
\label{tab:desc_stats}
\begin{tabular}{@{}lrrrrrrr@{}}
\toprule
\textbf{Province} & \textbf{N} & \textbf{Mean} & \textbf{Median} & \textbf{StdDev} & \textbf{IQR} & \textbf{Min} & \textbf{Max} \\
\midrule
Henan   & 1160 & 20737 & 16362 & 16050 & 16794 & 1600 &  85000 \\
Hubei   &  163 & 38334 & 30000 & 32028 & 29150 & 5150 & 172500 \\
Hunan   &  299 & 35275 & 30000 & 26478 & 32550 & 3333 & 140000 \\
Fujian  &  166 & 26722 & 17083 & 27135 & 20575 & 2000 & 150000 \\
Anhui   &  223 & 32745 & 25483 & 28908 & 23767 & 2050 & 200000 \\
Sichuan &  510 & 20558 & 14000 & 20780 & 17419 & 1089 & 126300 \\
\bottomrule
\end{tabular}
\end{table}

We assume the six populations satisfy the DRM with the basis
$$\mathbf{q}(x)=\big(1,\ x,\ x^2,\ \log x,\ \sqrt{x}\big)^\T,$$
which is flexible enough to capture Gaussian- and Gamma-like shapes commonly observed in income distributions.
We fit the DRM to the six provinces jointly and obtain the implied fitted distribution $\widehat{F}_r(x)$ for each province $r$. For visualization, Figure~\ref{fig:density_overlay} displays, for each province, a histogram of income (x-axis labeled in thousands of yuan) overlaid with the \emph{DRM-implied} kernel density (convolution of $\widehat{F}_r$ with a Gaussian kernel). The DRM density estimates clearly match the histograms well, 
supporting the model assumption. 

\begin{figure}[htbp]
    \centering
    \begin{subfigure}[b]{0.48\textwidth}
        \centering
        \includegraphics[width=\textwidth]{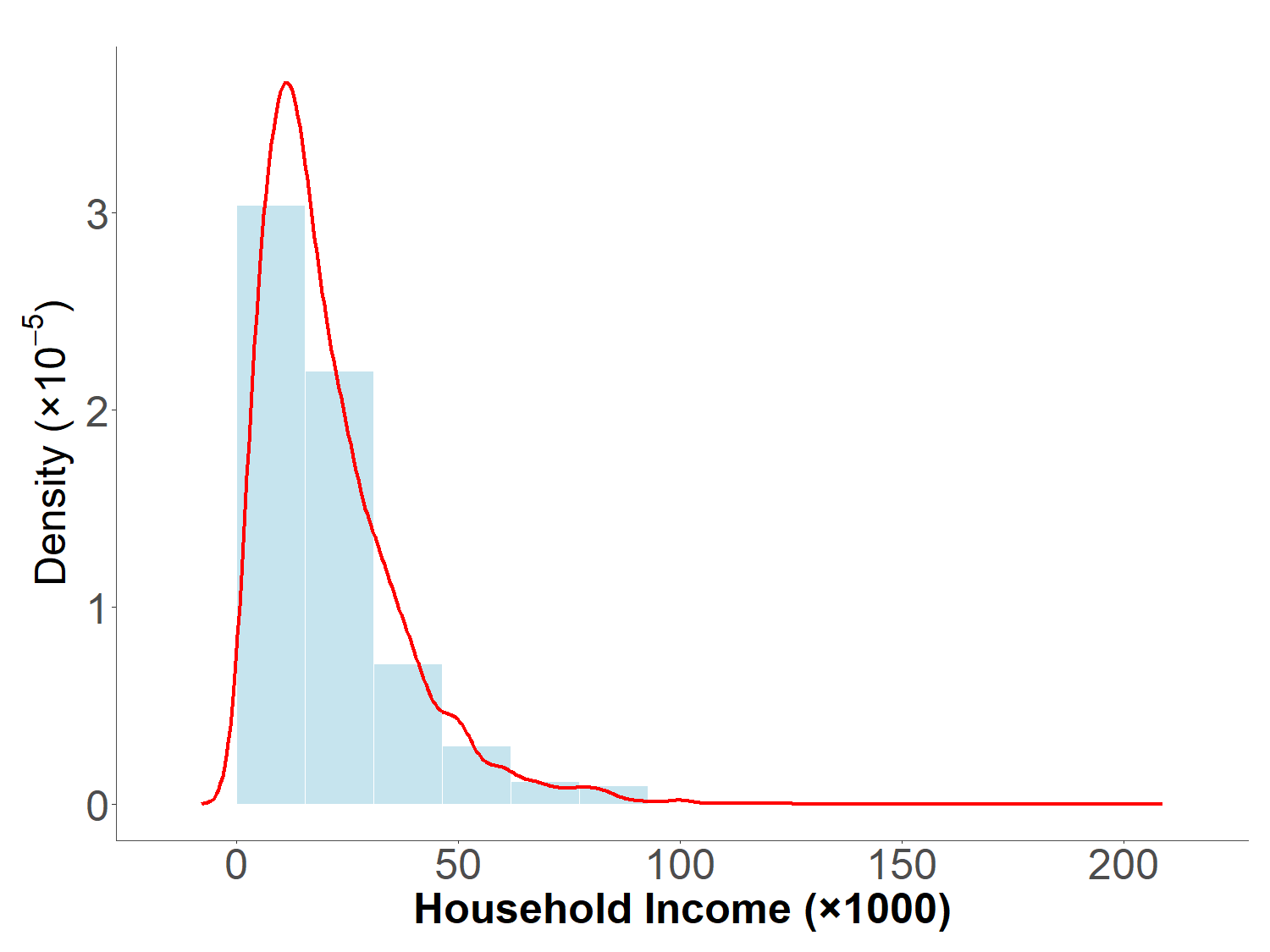}
        \subcaption{Henan}
        \label{fig:sub_henan}
    \end{subfigure}
    \hfill 
    \begin{subfigure}[b]{0.48\textwidth}
        \centering
        \includegraphics[width=\textwidth]{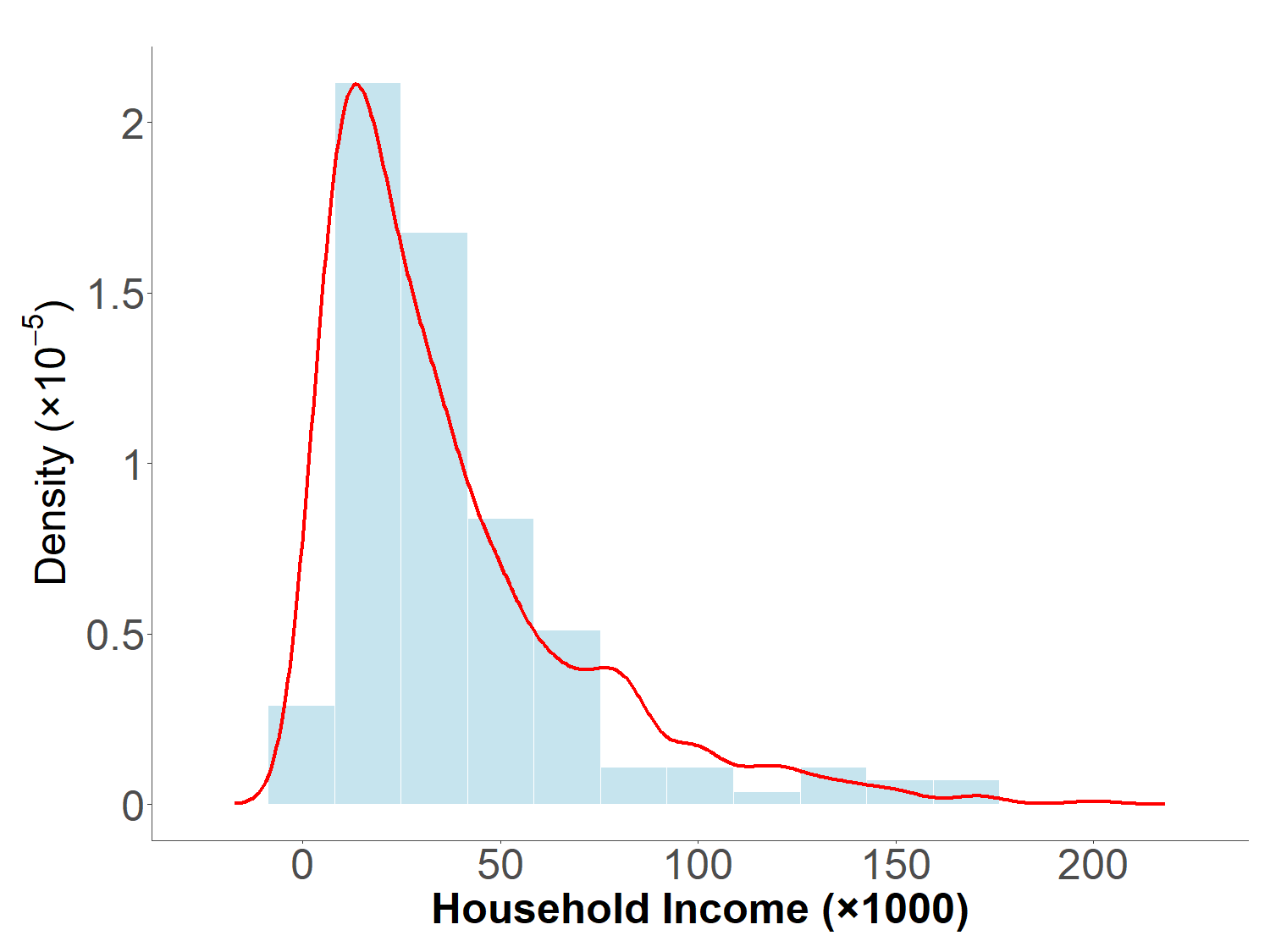}
        \subcaption{Hubei}
        \label{fig:sub_hubei}
    \end{subfigure}

    \vspace{0.5cm}

    \begin{subfigure}[b]{0.48\textwidth}
        \centering
        \includegraphics[width=\textwidth]{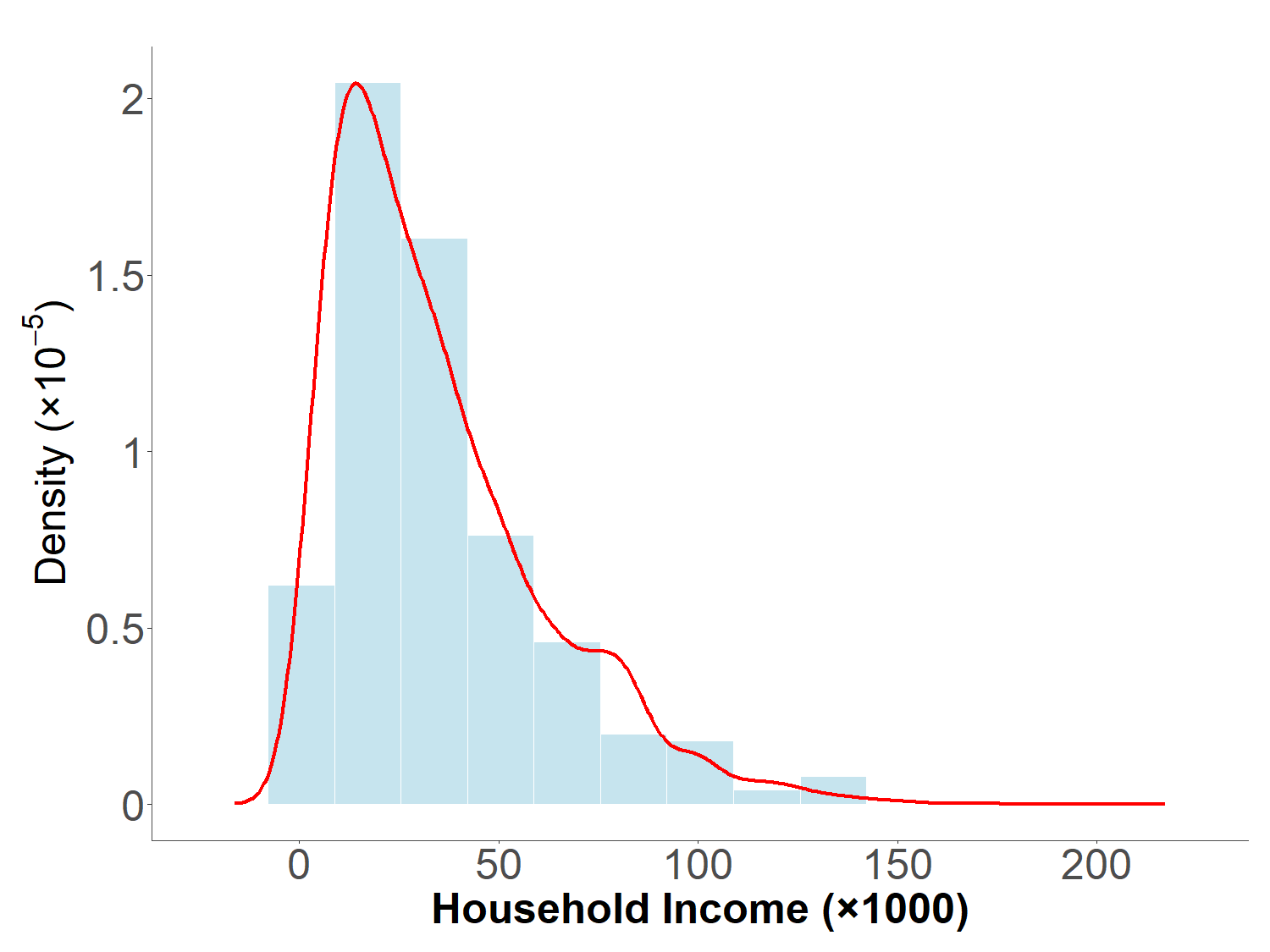}
        \subcaption{Hunan}
        \label{fig:sub_hunan}
    \end{subfigure}
    \hfill 
    \begin{subfigure}[b]{0.48\textwidth}
        \centering
        \includegraphics[width=\textwidth]{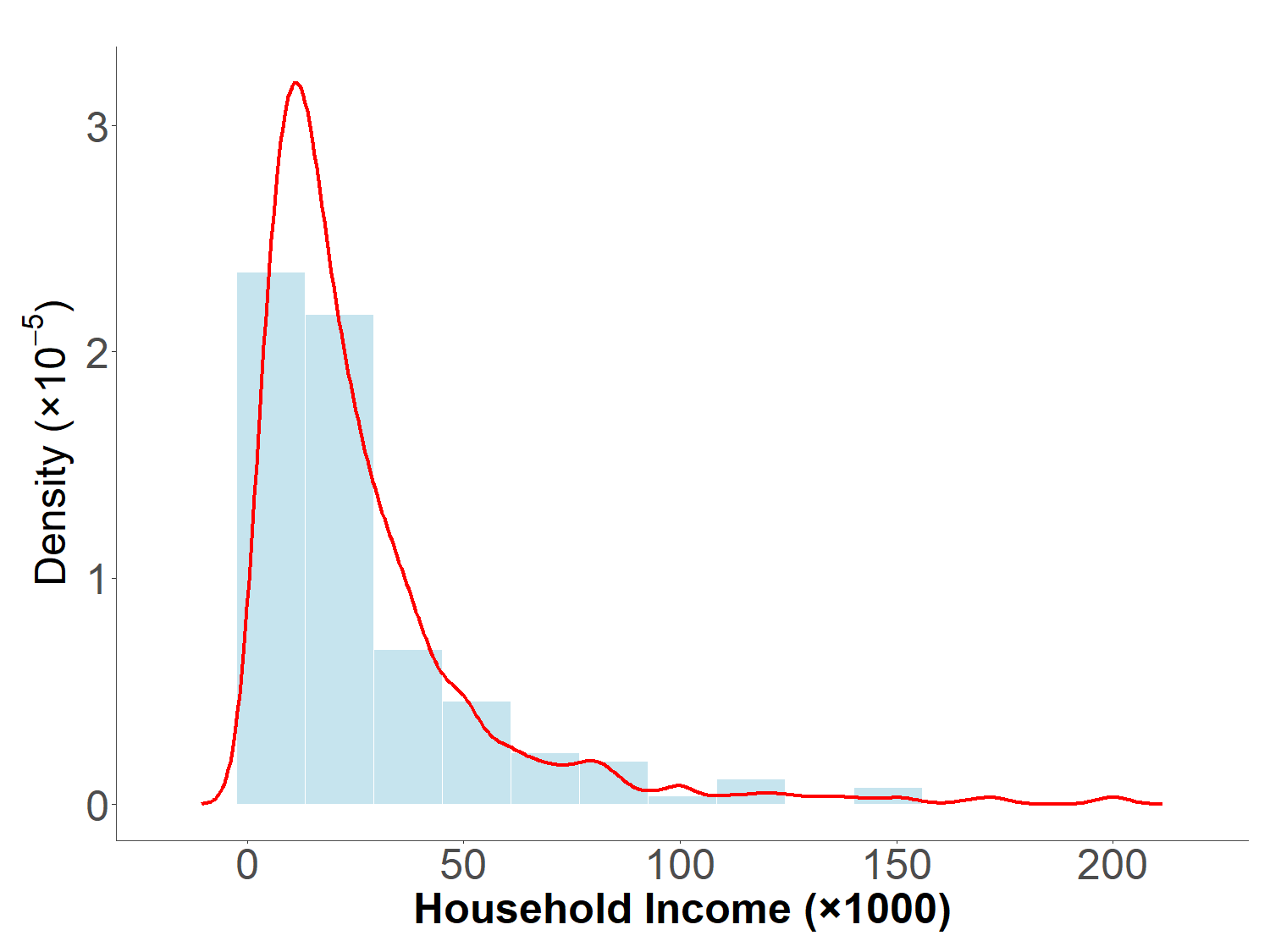}
        \subcaption{Fujian}
        \label{fig:sub_fujian}
    \end{subfigure}
     \vspace{0.5cm}

    \begin{subfigure}[b]{0.48\textwidth}
        \centering
        \includegraphics[width=\textwidth]{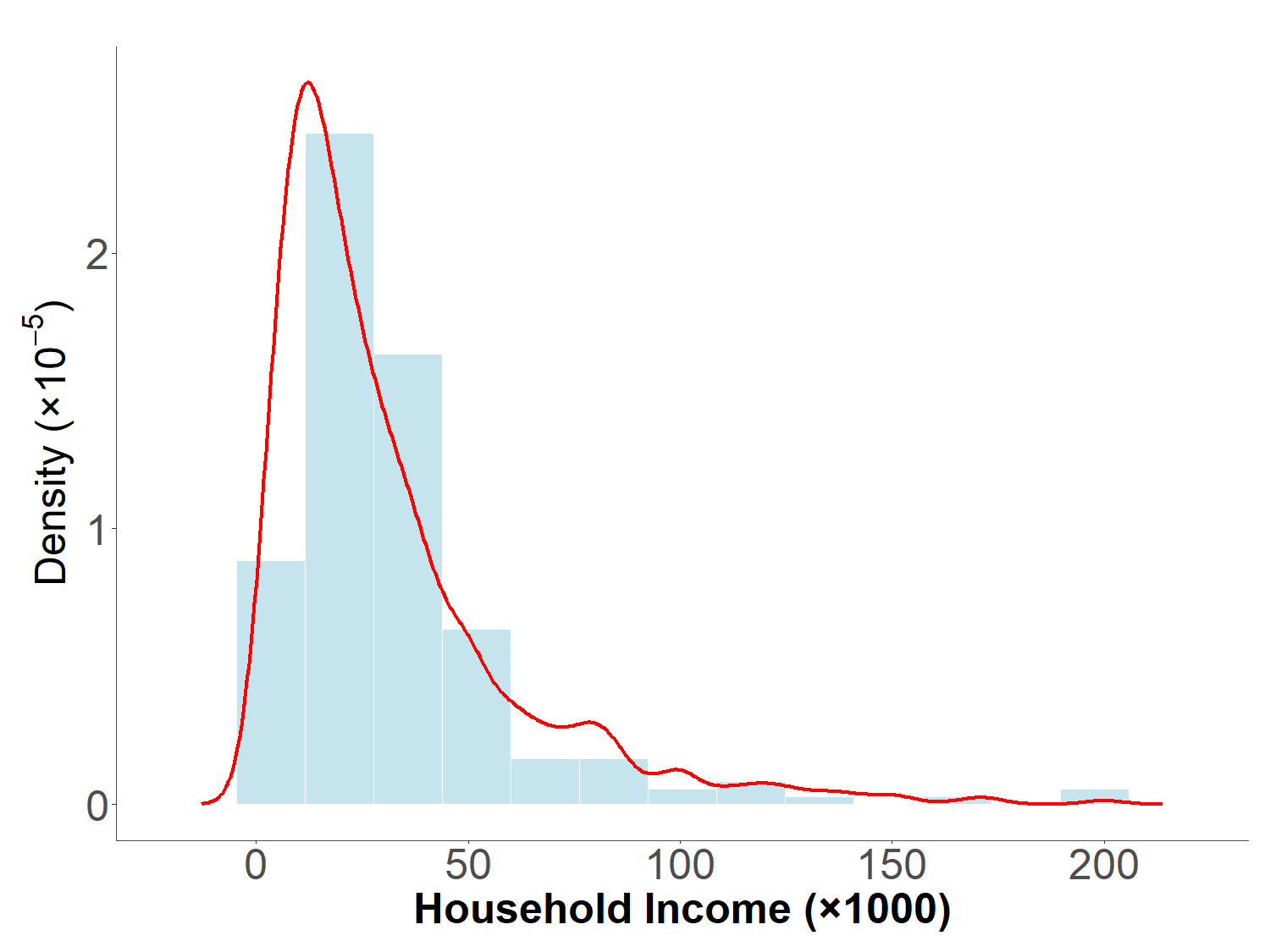}
        \subcaption{Anhui}
        \label{fig:sub_hunan}
    \end{subfigure}
    \hfill 
    \begin{subfigure}[b]{0.48\textwidth}
        \centering
        \includegraphics[width=\textwidth]{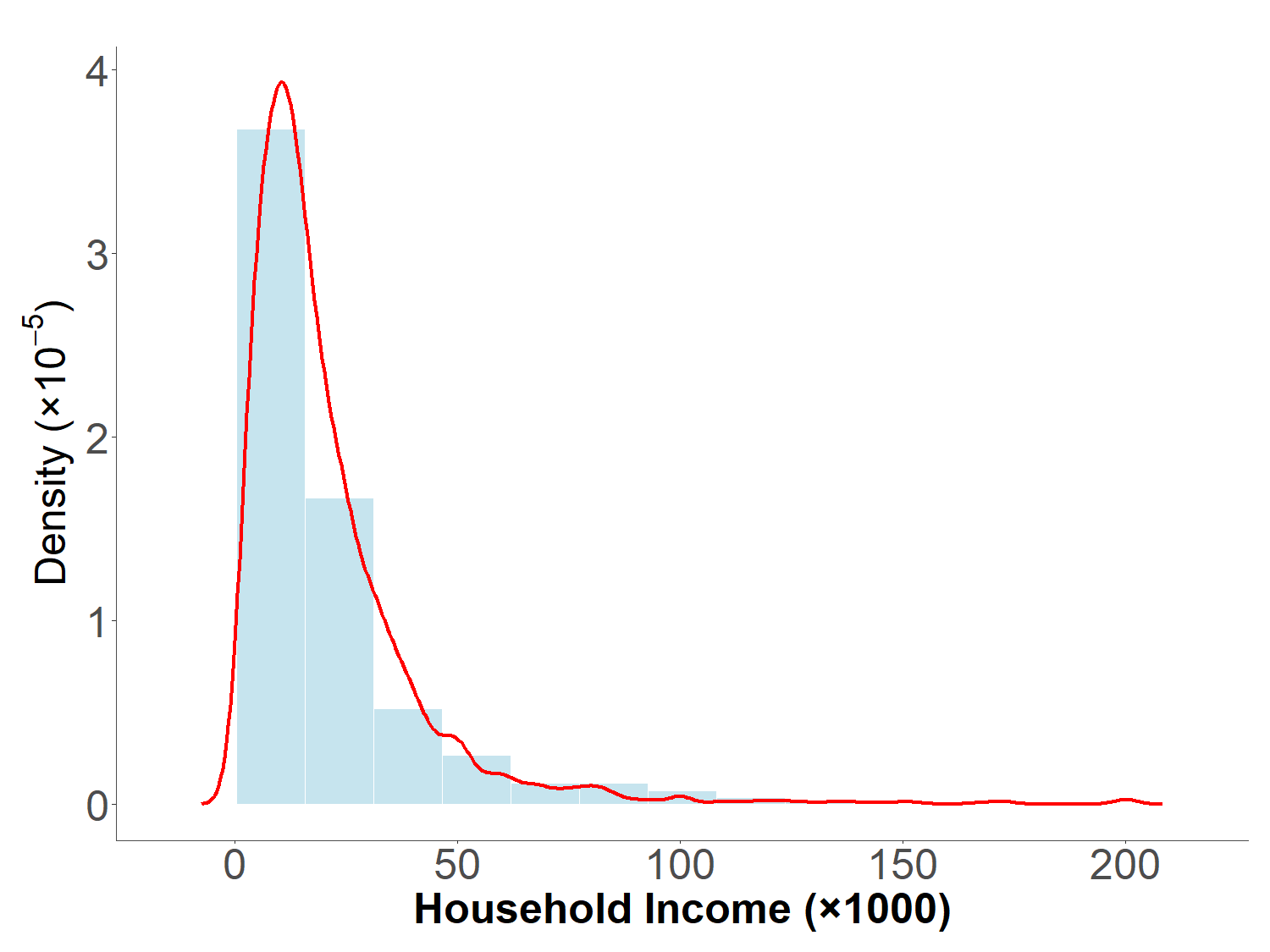}
        \subcaption{Sichuan}
        \label{fig:sub_fujian}
    \end{subfigure}
    \caption{Income histograms overlaid with DRM-implied density estimates.}
    \label{fig:density_overlay}
\end{figure}

Let us examine the poverty rates, median incomes, and distributional dominance among these six provinces,
which can provide vital information to policymakers. 
We apply the bootstrap method to construct their $95\%$ two-sided confidence intervals. 
More specifically, the parameters of interests are:
\begin{itemize}\setlength{\itemsep}{2pt}
\item \textbf{Poverty rate.} 
The poverty line was set at an annual income of 4,000 RMB at year 2020.
Hence, the poverty rate is $F(4000)$.

\item \textbf{Median income.} 
This parameter is plainly $Q(0.5)$.

\item \textbf{Dominance index.} 
The dominance index of province $r$ over province $s$ is given by
\[
\gamma(F_r, F_s)\;=\;\mu\{\,p\in(0,1): Q_r(p)>Q_s(p)\,\}.
\]
A value above $1/2$ reflects more individuals in province $r$
has improved income over province $s$.
\end{itemize}
We work out the percentile bootstrap $95\%$ confidence intervals (CIs) as in \eqref{boot.CI}.
We use $B = 20001$ bootstrap replicates. 
This procedure respects the joint estimation structure of the DRM and 
is supported by Theorems~\ref{the3}--\ref{the4}.

Figure~\ref{fig:forest_panels} presents two ``forest-style'' panels:
\begin{enumerate}\setlength{\itemsep}{2pt}
\item[(a)] Poverty rates for these six provinces with $95\%$ CIs.
\item[(b)] Median incomes with $95\%$ CIs.
\end{enumerate}
In each panel, the point marks the DRM estimate 
and the horizontal bar the bootstrap CI. 

\begin{figure}[ht]
    \centering
    \begin{subfigure}[b]{0.48\textwidth}
        \centering
        \includegraphics[width=\textwidth]{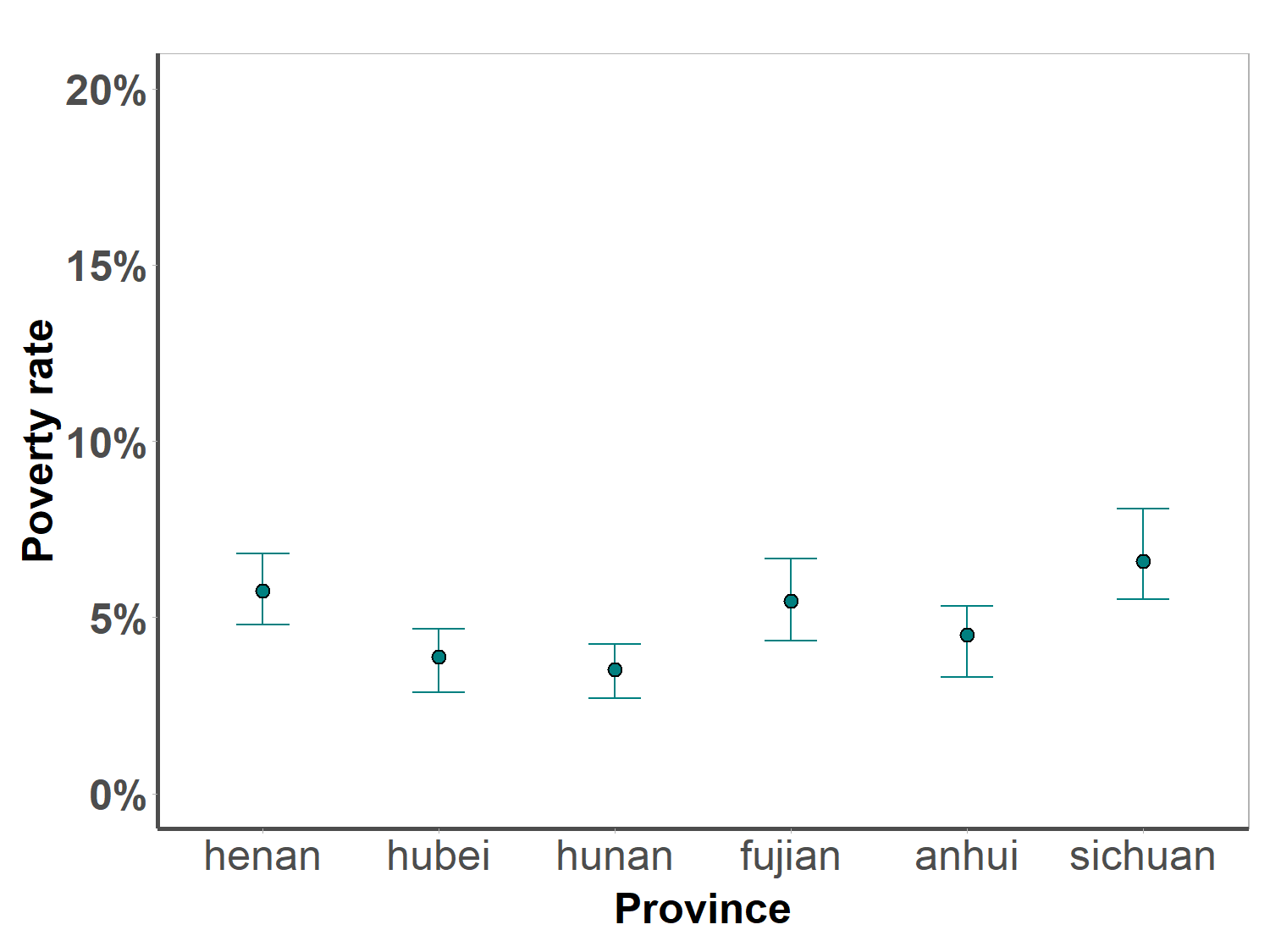}
        \subcaption{}
        \label{fig:poverty_rates}
    \end{subfigure}
    \hfill % 
    \begin{subfigure}[b]{0.48\textwidth}
        \centering
        \includegraphics[width=\textwidth]{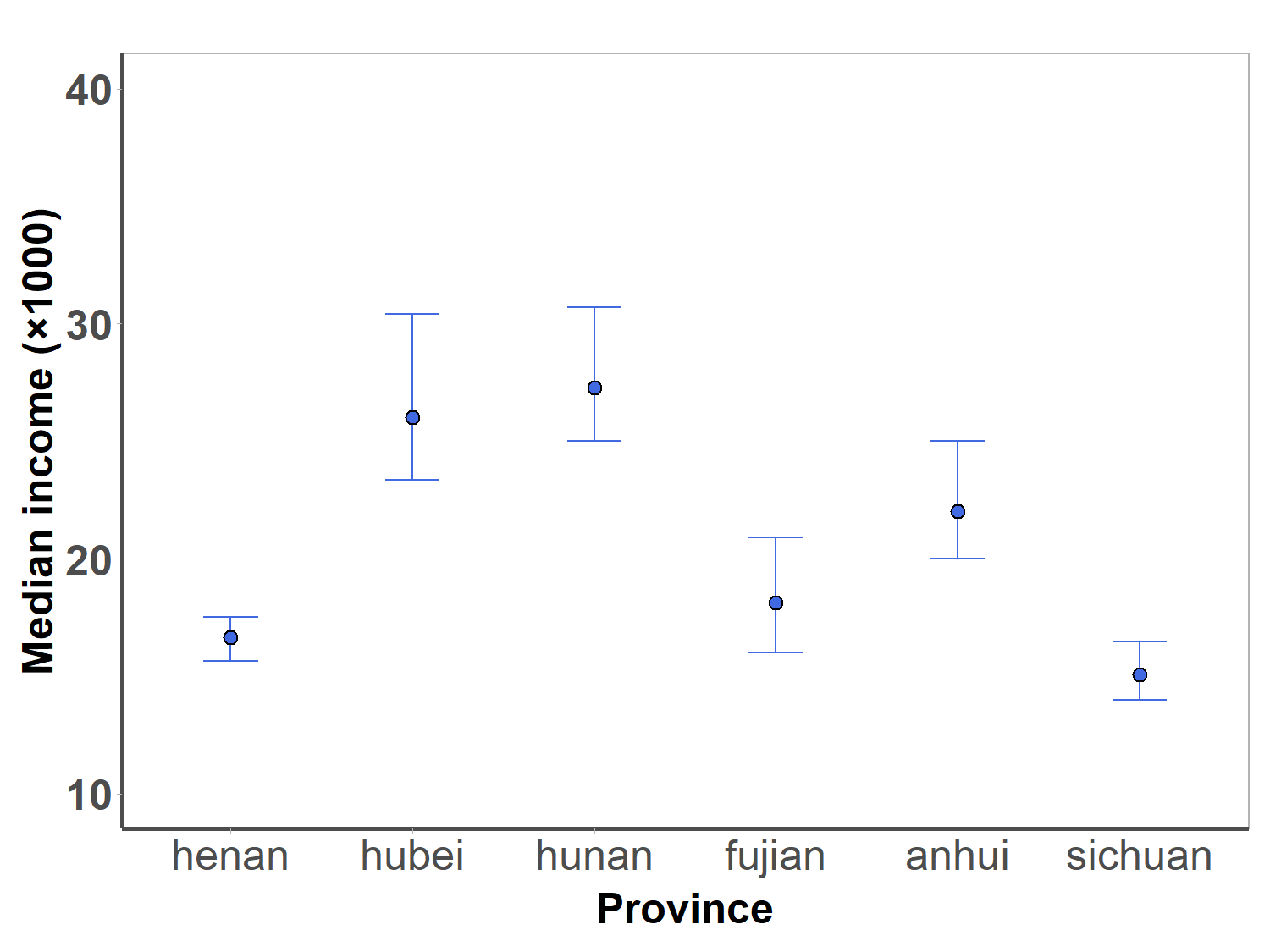}
        \subcaption{}
        \label{fig:median_incomes}
    \end{subfigure}

    \caption{DRM estimates and bootstrap $95\%$ CIs for various indicators.}
    \label{fig:forest_panels}
\end{figure}

Table~\ref{tab:gamma_indices} regards dominance indices,
their DRM based point estimates and bootstrap CIs.
\begin{table}[ht] 
\centering
\caption{Pairwise Dominant Indices $\gamma(F, G)$}
\label{tab:gamma_indices}
\resizebox{\textwidth}{!}{% This will shrink the table to fit the page width
\begin{tabular}{@{}llccccc@{}}
\toprule
\multicolumn{2}{c}{\multirow{2}{*}{\textbf{Population F (Row)}}} & \multicolumn{5}{c}{\textbf{Population G (Column)}} \\
\cmidrule(l){3-7}
 && \textbf{Hubei} & \textbf{Hunan} & \textbf{Fujian} & \textbf{Anhui} & \textbf{Sichuan} \\ 
\midrule
\multirow{2}{*}{\textbf{Henan} }  & Estimate &  0.000 & 0.000 & 0.000 & 0.000 & 0.888 \\
                            & CI         & (0.000, 0.000) & (0.000, 0.000) & (0.000, 0.651) & (0.000, 0.000) & (0.289, 0.969) \\
\addlinespace % Adds a small vertical space

\multirow{2}{*}{\textbf{Hubei} }  & Estimate &  & 0.290 & 0.991 & 0.995 & 0.998 \\
                            & CI        & & (0.007, 0.988) & (0.930, 1.000) & (0.422, 0.998) & (0.989, 1.000) \\
\addlinespace

\multirow{2}{*}{\textbf{Hunan }}  & Estimate &  &   & 0.969 & 0.926 & 0.993 \\
                            & CI       &    &  & (0.900, 0.997) & (0.724, 0.996) & (0.981, 0.998) \\
\addlinespace

\multirow{2}{*}{\textbf{Fujian}  } & Estimate &  &    &  & 0.011 & 0.997 \\
                            & CI       &    &  &  & (0.000, 0.199) & (0.930, 0.999) \\
\addlinespace

\multirow{2}{*}{\textbf{Anhui}}  & Estimate &  &    &  &  & 0.998 \\
                            & CI       &    &  &  &  & (0.978, 1.000) \\
\bottomrule

\end{tabular}
}
\begin{tablenotes}
  \item[\footnotesize Note:] \footnotesize Each cell contains the point estimate for $\gamma(F, G)$ (row F, column G) with its corresponding confidence interval (CI) in parentheses.
\end{tablenotes}
\end{table}

The analysis reveals significant economic heterogeneity among the six provinces. 
Two panels in Figure~\ref{fig:forest_panels} paint clear pictures of income levels 
and their associated uncertainty, highlights this disparity between provinces. 
Specifically, Figure~\ref{fig:median_incomes} shows a clear stratification of median incomes: 
Hubei and Hunan are notable leaders, with estimated medians reaching 30,000 RMB, 
while Sichuan (approx. 14,000 RMB) and Henan (16,362 RMB) lag significantly, 
placing them at the lower end, with Fujian and Anhui occupying intermediate positions. 
Concurrently, Figure~\ref{fig:poverty_rates} shows that the 2020 poverty rates
are generally below $8\%$, with Hubei and Hunan fair much better. 
The dominance index, $\gamma(F, G)$, in Table~\ref{tab:gamma_indices}
provides a comprehensive comparison of the entire income distributions. 
The results clearly reveal strong dominance relationships; 
for example, the estimated $\gamma(\text{Hubei}, \text{Henan})$ is 0.998 with a CI of (0.989, 1.000), implying Hubei's per capita income distribution practically dominants that of Henan. 
Also, Sichuan is practically dominated by all other provinces except for Henan at a less strength.
The indices also uncover complex relationships and uncertainty. 
An interesting finding is $\gamma(\text{Fujian}, \text{Anhui})$, 
with a point estimate of 0.011 and CI (0.000, 0.199), 
suggests that, contrary to what a simple median comparison might imply, 
more residents in Anhui are fair better than Fujian. 
With a confidence interval ($0.000, 0.651$) for $\gamma(\text{Henan}, \text{Fujian})$
which contains the middle ground $0.5$, the income levels of residents
in these two provinces are indifferent statistically.

\section{Conclusions}
\label{se7}
We provide a unified theory for bootstrap inference under the DRM. First, we establish distributional equivalence between DRM estimators and their bootstrap counterparts for model parameters. Second, we extend convergence for DRM distribution estimators from pointwise to process level, enabling asymptotics for functionals such as quantiles. Third, we show the corresponding bootstrap process shares the same weak limit. Together with simulations and a real-data application, these results offer a rigorous foundation for the widespread use of bootstrap methods in DRM analysis.

\section*{Acknowledgment}
This work was supported by National Natural Science Foundation of China (No. 72571262).
\bibliographystyle{biometrika}
\bibliography{paper-ref}
\end{document}